\documentclass[amstex,12pt, amssymb]{article}

\usepackage{mathtext}
\usepackage[cp1251]{inputenc}
\usepackage[T2A]{fontenc}
\usepackage[dvips]{graphicx}
\usepackage{amsmath}
\usepackage{amssymb}
\usepackage{amsxtra}
\usepackage{latexsym}
\usepackage{ifthen}

\textheight245mm \textwidth165mm
\parindent5mm
\parskip0mm

\voffset-27.3mm \hoffset-11.5mm

\newcounter{lemma}[section]

\newcounter{corollary}[section]

\newcounter{remark}[section]

\newcounter{theorem}[section]

\newcounter{proposition}[section]

\numberwithin{equation}{section}

\pagestyle{myheadings}

\def\XXint#1#2#3{{\setbox0=\hbox{$#1{#2#3}{\int}$}
     \vcenter{\hbox{$#2#3$}}\kern-.5\wd0}}

\def\cc{\setcounter{equation}{0}
\setcounter{figure}{0}\setcounter{table}{0}}

\begin{document}

\markboth{\centerline{V. Gutlyanskii, V. Ryazanov, E. Yakubov, A.
Yefimushkin}} {\centerline{On Hilbert problem for Beltrami equation \\
in quasihyperbolic domains}}

\author{{V.Gutlyanskii, V.Ryazanov, E. Yakubov, A.Yefimushkin}}

\title{{\bf On Hilbert problem for Beltrami equation \\
in quasihyperbolic domains}}

\maketitle

\large \begin{abstract} We study the Hilbert boundary value problem
for the  Beltrami equation in the Jordan domains satisfying the
quasihyperbolic boundary condition by Gehring--Martio, generally
speaking, without $(A)-$condition by Ladyzhenskaya--Ural'tseva that
was standard for boundary value problems in the PDE theory. Assuming
that the coefficients of the problem are functions of countable
bounded variation and the boundary data are measurable with respect
to the logarithmic capacity, we prove the existence of the
generalized regular solutions. As a consequence, we derive the
existence of nonclassical solutions of the Dirichlet, Neumann and
Poincare boundary value problems for generalizations of the Laplace
equation in anisotropic and inhomogeneous media.
\end{abstract}

\bigskip
{\bf 2010 Mathematics Subject Classification: Primary  30C062,
31A05, 31A20, 31A25, 31B25, 35J61; Se\-con\-da\-ry 30E25, 31C05,
34M50, 35F45, 35Q15}

\bigskip
{\bf Keywords :} Hilbert boundary value problem, Beltrami equation,
quasi\-hyperbolic boundary condition, logarithmic capacity, angular
limits

\bigskip

{\bf In  memory of Professor Bogdan Bojarski}

\large \cc
\section{Introduction}

D. Hilbert \cite{H1} studied the boundary value problem formulated
as follows: To find an analytic function $f(z)$ in a domain $D$
bounded by a rectifiable Jordan contour $C$ that satisfies the
boundary condition
\begin{equation}\label{2}
\lim\limits_{z\to\zeta}\ {\rm{Re}}\,
\{\overline{\lambda(\zeta)}\
f(z)\}\ =\ \varphi(\zeta) \quad\quad\quad\ \ \ \forall \ \zeta\in C\
,
\end{equation}
where both the {\it coefficient} $\lambda$ and the {\it boundary
date} $\varphi$ of the problem are con\-ti\-nu\-ous\-ly
differentiable with respect to the natural parameter $s$ on $C.$

Moreover, it was assumed by Hilbert that $\lambda\ne 0$ everywhere
on $C$. The latter allows us, without loss of generality, to
consider that $|\lambda|\equiv 1$ on $C$. Note that the quantity
${\rm{Re}}\,\{\overline{\lambda}\, f\}$ in (\ref{2}) means a
projection of $f$ into the direction $\lambda$ interpreted as
vectors in $\mathbb R^2$.

\medskip

The reader can find a rather comprehensive treatment of the theory
in the new excellent books \cite{Be,BW,HKM,TO}. We also recommend
to make familiar with the historic surveys contained in the
monographs \cite{G,Mus,Vek} on the topic with an exhaustive
bibliography and take a look at our recent  papers
\cite{GR,GRY,R1}.

\medskip

In this paper we study the Hilbert boundary value problem in a wider
class of functions than those of analytic. Namely, instead of
analytic functions we will consider {\it quasiconformal functions}
$F$ represented as a composition of analytic functions ${\cal A}$
and quasiconformal mappings $f$, see \cite{LV}, Chapter VI. In this
connection, we need to recall some definitions and notations from
the theory of quasiconformal mappings in the plane.

\medskip

Let $D$ be a domain in the complex plane $\mathbb C$ and let $\mu:
D\to\mathbb C$ be a mea\-su\-rab\-le function with $|\mu(z)|<1$ a.e.
The equation of the form
\begin{equation}\label{1}
f_{\bar{z}}=\mu(z) f_z\
\end{equation}
where $f_{\bar z}={\bar\partial}f=(f_x+if_y)/2$, $f_{z}=\partial
f=(f_x-if_y)/2$, $z=x+iy$, $f_x$ and $f_y$ are partial derivatives
of the function $f$ in $x$ and $y$, respectively, is said to be a
{\it Beltrami equation}. The  equation~\eqref{1} is said to be {\it
nondegenerate} if $||\mu||_{\infty}<1$, see e.g. \cite{Alf},
\cite{BGMR} and \cite{LV}, that we will  assume later on.

\medskip

Homeomorphic solutions $f$ of a nondegenerate equation (\ref{1}) in
the class $W^{1,2}_{\rm loc}$ are called {\it quasiconformal
mappings}. It is easy to see that every quasiconformal function
$F={\cal A}\circ f$ satisfies the same Beltrami equation as $f.$

\medskip

Recall also that the images of the unit disk $\mathbb D = \{
z\in\mathbb{C}: |z|<1\}$ under the quasiconformal mappings of
$\mathbb C$ onto itself are called {\it quasidisks} and their
boundaries are called {\it quasicircles} or {\it quasiconformal
curves}. It is known that every smooth (or Lipschitz) Jordan curve
is a quasiconformal curve and, at the same time, quasiconformal
curves can be locally nonrectifiable as it follows from the known
examples, see e.g. the point II.8.10 in \cite{LV}. On the other
hand, see Section 3, quasicircles satisfy the well-known
(A)--condition, which is standard in the theory of boundary value
problems for PDE, see e.g. \cite{LU}.

\medskip

Proceeding from the above, the problem under consideration is to
find the quasiconformal function, satisfying both the Beltrami
equation (\ref{1}) in a Jordan domain $D$ and the Hilbert boundary
condition (\ref{2}).  We substantially weaken the regularity
conditions both on the functions $\lambda$ and $\varphi$ in the
boundary condition (\ref{2}) and on the boundary $C$ of the domain
$D.$ On the one hand, we will deal with the coefficients $\lambda$
of {\it countable bounded variation} and the boundary data $\varphi$
which are measurable with respect to {\it the logarithmic capacity}.
On the other hand, the fundamental Becker -- Pommerenke result in
\cite{BP} allows us to study the Hilbert boundary value problem in
domains $D$  with the {\it quasihyperbolic boundary condition}
introduced in \cite{GM}, see also \cite{AK}. It is important to note
that such domains
 may fail to satisfy the (A)--condition, see Section 3.

\medskip

Let $D$ be a Jordan domain such that it has a tangent at a point
$\zeta\in\partial D.$ A path in $D$ terminating at $\zeta$ is
called {\it nontangential} if its part in a neighborhood of
$\zeta$ lies inside of an angle in $D$ with the vertex at $\zeta$.
The limit along all nontangential paths at $\zeta$  is called {\it
angular} at the point. The latter notion is a standard  tool for
the study of the boundary behavior of analytic and harmonic
functions, see e.g., \cite{Du}, \cite{Ku} and \cite{Po}. Further,
the Hilbert boundary condition (\ref{2}) will be understood
precisely in the sense of the angular limit.

\medskip

The notion of the logarithmic capacity is the important tool for our
research, see e.g. \cite{Kar}, \cite{N}, \cite{No}, because the sets
of zero logarithmic capacity are transformed under quasiconformal
mappings into the sets of zero logarithmic ca\-pa\-ci\-ty. Note
that, as it follows from the classic Ahlfors-Beurling example, see
\cite{AB}, the sets of zero length as well as the sets of zero
harmonic measure are not invariant under quasiconformal mappings.
\par
Dealing with measurable boundary date functions $\varphi(\zeta)$
with respect to the logarithmic capacity, we will use the {\it
abbreviation q.e.} ({\it quasi-everywhere}) on a set
$E\subset{\mathbb C},$ if a property holds for all $\zeta\in E$
except its subset of zero logarithmic capacity, see \cite{La}.

\medskip

\section{Definitions and preliminary remarks}


Given a bounded Borel set  $E$ in the plane $\Bbb C$, a {\it mass
distribution} on $E$ is a nonnegative completely additive function
$\nu$ of a set defined on its Borel subsets with $\nu(E)=1$. The
function
\begin{equation}\label{4}
U^{\nu}(z):=\int\limits_{E} \log\left|\frac1{z-\zeta}
\right|\,d\nu(\zeta)
\end{equation}
is called a {\it logarithmic potential} of the mass distribution
$\nu$ at a point $z\in\Bbb C$. A {\it logarithmic capacity} $C(E)$
of the Borel set $E$ is the quantity
\begin{equation}\label{5}
C(E)=e^{-V}\ , \qquad V\ =\ \inf_{\nu}\ V_{\nu}(E)\ , \qquad
V_{\nu}(E)\ =\ \sup_{z}\ U^{\nu}(z)\ .
\end{equation}

It is also well-known the following geometric characterization of
the logarithmic capacity, see e.g. the point 110 in \cite{N}:
\begin{equation}\label{6}
C(E)\ =\ \tau(E)\ :=\ \lim_{n\to\infty}\ V_n^{\frac2{n(n-1)}}
\end{equation}
where $V_n$ denotes the supremum of the product
\begin{equation}\label{7}
V(z_1,\dots,z_n)\ =\prod_{k<l}^{l=1,\dots,n}|z_k-z_l|
\end{equation}
taken over all collections of points $z_1,\dots,z_n$ in the set $E$.
Following F\'ekete, see \cite{F}, the quantity $\tau(E)$ is called
the {\it transfinite diameter} of the set $E$.

\medskip

\begin{remark}\label{R0}
Thus, we see that if $C(E)=0$, then $C(f(E))=0$ for an arbitrary
mapping $f$ that is continuous by H\"older and, in particular, for
quasiconformal mappings on compact sets, see e.g. Theorem II.4.3 in
\cite{LV}.
\end{remark}

\medskip

In order to introduce sets that are measurable with respect to
logarithmic capacity, we define, following \cite{Kar}, {\it inner
$C_*$ and outer $C^*$ capacities: }
\begin{equation}\label{INNER}
C_*(E)\ \colon =\ \sup_{F\subseteq E}\ C(E),\ \ \ \ \ \ \ \
C^{*}(E)\ \colon =\ \inf_{E\subseteq O}\ C(O)
\end{equation}
where supremum is taken over all compact sets $F\subset\Bbb C$ and
infimum is taken over all open sets $O\subset\Bbb C$. A set
$E\subset\Bbb C$ is called {\it measurable with respect to the
logarithmic capacity} if $C^{*}(E) = C_*(E),$ and the common value
of $C_*(E)$ and $C^*(E)$ is still denoted by $C(E)$.

A function $\varphi:E\to\mathbb{C}$ defined on a bounded set
$E\subset\mathbb{C}$ is called {\it measurable with respect to
logarithmic capacity} if, for all open sets $O\subseteq\Bbb C$,  the
sets
\begin{equation}\label{FUN}
\Omega =\{ z\in E: \varphi(z)\in O\}
\end{equation}
are measurable with respect to logarithmic capacity. It is clear
from the definition that the set E is itself measurable with respect
to logarithmic capacity.

\medskip

Note also that sets of logarithmic capacity zero coincide with sets
of the so-called {\it absolute harmonic measure} zero introduced by
Nevanlinna, see Chapter V in \cite{N}. Hence a set $E$ is of
(Hausdorff) length zero if $C(E)=0$, see Theorem V.6.2 in \cite{N}.
However, there exist sets of length zero having a positive
logarithmic capacity, see e.g. Theorem IV.5 in \cite{Kar}.

\medskip

\begin{remark}\label{R1}
It is known that Borel sets and, in particular, compact and open
sets are measurable with respect to logarithmic capacity, see e.g.
Lemma I.1 and Theorem III.7 in \cite{Kar}. Moreover, as it follows
from the definition, any set $E\subset\Bbb C$ of finite logarithmic
capacity can be represented as a union of a sigma-compactum (union
of countable collection of compact sets) and a set of logarithmic
capacity zero. It is also known that the Borel sets and, in
particular, compact sets are measurable with respect to all
Hausdorff's measures and, in particular, with respect to measure of
length, see e.g. theorem II(7.4) in \cite{S}. Consequently, any set
$E\subset\Bbb C$ of finite logarithmic capacity is measurable with
respect to measure of length. Thus, on such a set any function
$\varphi:E\to\mathbb{C}$ being measurable with respect to
logarithmic capacity is also measurable with respect to measure of
length on $E$. However, there exist functions that are measurable
with respect to measure of length but not measurable with respect to
logarithmic capacity, see e.g. Theorem IV.5 in \cite{Kar}.
\end{remark}

\bigskip

We call $\lambda:\partial\mathbb D\to\mathbb C$ a {\it function of
bounded variation}, write $\lambda\in\mathcal{BV}(\partial\mathbb
D)$, if
\begin{equation}\label{20}
V_{\lambda}(\partial\mathbb D)\ \colon =\ \sup\
\sum\limits_{j=1}\limits^{j=k}\
|\lambda(\zeta_{j+1})-\lambda(\zeta_j)| \ <\ \infty \end{equation}
where the supremum is taken over all finite collections of points
$\zeta_j\in\partial\mathbb D$, $j=1,\ldots , k$, with the cyclic
order meaning that $\zeta_j$ lies between $\zeta_{j+1}$ and
$\zeta_{j-1}$ for every $j=1,\ldots , k$. Here we assume that
$\zeta_{k+1}=\zeta_1=\zeta_0$. The quantity
$V_{\lambda}(\partial\mathbb D)$ is called the {\it variation of the
function} $\lambda$.

\bigskip

\begin{remark}\label{R4}
It is clear by the triangle inequality that if we add new
intermediate points in the collection $\zeta_j$, $j=1,\ldots , k$,
then the sum in \eqref{20} does not decrease. Thus, the given
supremum is attained as $\delta=\sup\limits_{j=1,\ldots
k}|\zeta_{j+1}-\zeta_j|\to 0$. Note also that by the definition
$V_{\lambda}(\partial\mathbb D)=V_{\lambda\circ h}(\partial\mathbb
D)$, i.e., the {\it variation is invariant} under every
homeomorphism $h:\partial\mathbb D\to\partial\mathbb D$ and, thus,
the definition can be extended in a natural way to an arbitrary
Jordan curve in $\mathbb C$.
\end{remark}

\bigskip

The following statement was proved as Proposition 5.1 in the paper
\cite{ER} where the function $\alpha_{\lambda}$ has been called by a
{\it function of argument} of $\lambda$.

\bigskip

\begin{proposition}\label{P2} {\it
For every function
$\lambda:\partial\mathbb{D}\to\partial\mathbb{D}$ of the class
$\mathcal{BV}(\partial\mathbb{D})$ there is a function
$\alpha_{\lambda}:\partial\mathbb{D}\to\mathbb{R}$ of the class
$\mathcal{BV}(\partial\mathbb{D})$ with $V_{\alpha_{\lambda}}\le
V_{\lambda}\, 3\pi/2$ such that
$\lambda(\zeta)=\exp\{{i\alpha_{\lambda}}(\zeta)\}$ for all
$\zeta\in\partial\mathbb{D}$.}
\end{proposition}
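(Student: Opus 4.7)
The plan is to construct $\alpha_\lambda$ as a cumulative sum of principal-branch arguments along a progressively refined partition of $\partial\mathbb{D}$ and then to estimate its variation using the chord--arc inequality on the unit circle. First, I would fix a base point $\zeta_0\in\partial\mathbb{D}$ and any $\alpha_0\in\mathbb{R}$ with $e^{i\alpha_0}=\lambda(\zeta_0)$, and record the elementary pointwise inequality
$$|{\arg}_{(-\pi,\pi]}(w_2/w_1)|\ =\ 2\arcsin\bigl(|w_1-w_2|/2\bigr)\ \le\ \tfrac{\pi}{2}\,|w_1-w_2|,\qquad w_1,w_2\in\partial\mathbb{D},$$
which converts a chord increment between two unit complex numbers into the corresponding principal angular increment. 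By Remark~\ref{R4}, the supremum in~\eqref{20} is attained as the partition mesh tends to zero, so for every $\delta>0$ there is a cyclic partition $\Pi_\delta=\{\zeta_0,\zeta_1,\dots,\zeta_{N}\}$ of $\partial\mathbb{D}$ of mesh $<\delta$ whose chord sum is arbitrarily close to $V_\lambda$.

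Working with a nested sequence $\Pi_{\delta_n}$, $\delta_n\to 0$, whose union $\Pi$ is dense in $\partial\mathbb{D}$, I would define $\alpha_\lambda$ on $\Pi$ cumulatively by
$$\alpha_\lambda(\zeta_{j+1})\ =\ \alpha_\lambda(\zeta_j)\ +\ {\arg}_{(-\pi,\pi]}\bigl(\lambda(\zeta_{j+1})/\lambda(\zeta_j)\bigr),$$
with initial value $\alpha_\lambda(\zeta_0)=\alpha_0$. Each summand has absolute value at most $(\pi/2)|\lambda(\zeta_{j+1})-\lambda(\zeta_j)|$. Refinement from $\Pi_{\delta_n}$ to $\Pi_{\delta_{n+1}}$ can alter the cumulative value at a pre-existing node only by integer multiples of $2\pi$, and the hypothesis $\lambda\in\mathcal{BV}(\partial\mathbb{D})$ controls the total count of such corrections. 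For $\zeta\in\partial\mathbb{D}\setminus\Pi$, the same cumulative rule relative to the cyclically preceding node of $\Pi_{\delta_n}$ produces a Cauchy sequence whose limit is the sought value $\alpha_\lambda(\zeta)\in\mathbb{R}$, automatically satisfying $e^{i\alpha_\lambda(\zeta)}=\lambda(\zeta)$.

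To bound $V_{\alpha_\lambda}$, I would take any cyclic partition $\{\eta_0,\dots,\eta_k\}$ of $\partial\mathbb{D}$, insert it into a sufficiently fine $\Pi_{\delta_n}$, and apply the pointwise inequality on the refined partition to bound the contribution of the principal-argument parts by $(\pi/2)V_\lambda$. Returning to the coarse partition may introduce at most one $2\pi$-correction per occurrence of a full loop of $\lambda(\zeta)$ around the origin; since every such loop contributes at least $2$ to the chord variation $V_\lambda$, the number of corrections is bounded by $V_\lambda/2$, and their total contribution to $V_{\alpha_\lambda}$ by $\pi V_\lambda$. Combining the two estimates yields $V_{\alpha_\lambda}\le (\pi/2+\pi)V_\lambda=(3\pi/2)V_\lambda$.

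The main obstacle I expect is the careful bookkeeping of the $2\pi$-corrections across refinements and when passing from a fine partition back to an arbitrary one; the discontinuities of $\lambda$, which are only countable but can be dense, must be uniformly incorporated into the cumulative construction, and one has to verify that refinement does not cause the cumulative sum at a pre-existing partition point to drift by an unbounded number of $2\pi$'s. It is precisely this accounting that produces the cruder constant $3\pi/2$ in place of the pointwise-optimal factor $\pi/2$.
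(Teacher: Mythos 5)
There is nothing in the paper to compare your argument against: Proposition \ref{P2} is not proved here but quoted verbatim from Proposition 5.1 of \cite{ER}, so your proposal has to stand on its own. Its starting point is sound: the inequality $|{\arg}_{(-\pi,\pi]}(w_2/w_1)|=2\arcsin(|w_1-w_2|/2)\le(\pi/2)|w_1-w_2|$ is correct and immediately bounds every cumulative sum of principal-argument increments over any partition by $(\pi/2)V_{\lambda}$, and Remark \ref{R4} does let you work with partitions of small mesh.

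The genuine gap is that everything after this estimate is asserted rather than proved, and what is asserted is precisely the content of the proposition. First, your convergence claims fail as stated. Take $\lambda\equiv 1$ near a point $\zeta^*$ except that $\lambda$ oscillates as $e^{i\delta}$ with $\delta$ changing sign and tending to $0$, while $\lambda(\zeta^*)=-1$: then ${\arg}_{(-\pi,\pi]}(\lambda(\zeta^*)/\lambda(\zeta_j^{(n)}))$ flips between values near $+\pi$ and near $-\pi$ as the preceding node $\zeta_j^{(n)}$ varies, so the sequence you call Cauchy is not Cauchy, and the cumulative value at every node downstream of $\zeta^*$ oscillates by $2\pi$ under refinement rather than stabilizing; the construction as written depends on the choice of the nested partitions, and a fix (e.g.\ anchoring the branch at jump points to the one-sided limits of $\lambda$, which exist for $\mathcal{BV}$ functions) is not indicated. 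Second, "the hypothesis $\lambda\in\mathcal{BV}$ controls the total count of such corrections" is not an argument: a single $2\pi$-correction upon inserting a node only certifies that the chordal variation of $\lambda$ on the interval being subdivided exceeds $2$, and since successive corrections at the same site occur in nested intervals, these certificates are not disjoint pieces of $V_{\lambda}$ and finiteness of $V_{\lambda}$ does not by itself bound their number; one has to localize the recurring corrections to the finitely many points where the one-sided jumps of $\lambda$ sum to at least $2$ in chord and prove stabilization there. Third, the final accounting ("one $2\pi$-correction per full loop, each loop costing at least $2$ in $V_{\lambda}$") conflates the winding of the limit function with the coarse-versus-fine discrepancies and does not bound $\sum_j|\alpha_{\lambda}(\eta_{j+1})-\alpha_{\lambda}(\eta_j)|$ for an arbitrary test partition in \eqref{20}, whose nodes need not lie in your dense set $\Pi$ and at which $\alpha_{\lambda}$ is defined by a different (one-sided) rule. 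The strategy is workable, but the decisive steps are exactly the ones deferred.
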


\bigskip

Now, we call $\lambda:\partial\mathbb D\to\mathbb C$ a function of
{\it countable bounded variation}, write
$\lambda\in\mathcal{CBV}(\partial\mathbb D)$, if there is a
countable collection of mutually disjoint arcs $\gamma_n$ of
$\partial\mathbb D$, $n=1,2,\ldots$ on each of which the restriction
of $\lambda$ is of bounded variation $V_n$ and the set
$\partial\mathbb D\setminus \cup\gamma_n$ has logarithmic capacity
zero. In particular, the latter holds true if $\partial\mathbb
D\setminus \cup\gamma_n$ is countable. Choosing smaller $\gamma_n$,
we may assume that $\sup\limits_n V_n<\infty$. It is clear, such
functions can be singular enough, see e.g. \cite{DMRV}.

\bigskip

The definition is also extended in the natural way to an arbitrary
Jordan curve $\Gamma$ in $\mathbb C$. Later on,
$L_c^{\infty}(\Gamma)$ denotes the class of all functions
$\alpha:\Gamma\to\mathbb R$ which are measurable with respect to
logarithmic capacity such that $\alpha$ is q.e. bounded on
$\Gamma.$

\bigskip

\begin{proposition}\label{P22} {\it
For every function $\lambda:\partial\mathbb{D}\to\partial\mathbb{D}$
in the class $\mathcal{CBV}(\partial\mathbb{D})$ there is a function
$\alpha_{\lambda}:\partial\mathbb{D}\to\mathbb{R}$ in the class
$L_c^{\infty}(\partial\mathbb{D})\cap\mathcal{CBV}(\partial\mathbb{D})$
such that
\begin{equation}\label{ARGUMENT}
\lambda(\zeta)=\exp\{{i\alpha_{\lambda}}(\zeta)\}\ \ \ \ \ \
\mbox{q.e. on $\partial\mathbb{D}$}.
\end{equation}
}
\end{proposition}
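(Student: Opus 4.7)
The plan is to apply Proposition~\ref{P2} on each of the arcs $\gamma_n$ supplied by the $\mathcal{CBV}$-hypothesis and to glue the resulting branches of the argument together, using that the complementary set has logarithmic capacity zero to absorb any gluing ambiguity.

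First I would fix such a decomposition: mutually disjoint arcs $\gamma_n\subset\partial\mathbb{D}$ with $V_n:=V(\lambda|_{\gamma_n})$, $V:=\sup_n V_n<\infty$, and $C(E_0)=0$ for $E_0:=\partial\mathbb{D}\setminus\bigcup_n\gamma_n$. For each $n$, I extend $\lambda|_{\gamma_n}$ to a $\partial\mathbb{D}$-valued function $\tilde\lambda_n$ on all of $\partial\mathbb{D}$ of bounded variation; e.g.\ send the complementary arc monotonically along the shorter subarc of $\partial\mathbb{D}$ between the two endpoint values of $\lambda|_{\gamma_n}$, which adds at most $\pi$ to the variation. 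Proposition~\ref{P2} applied to $\tilde\lambda_n$ then yields $\tilde\alpha_n\in\mathcal{BV}(\partial\mathbb{D})$ with $V(\tilde\alpha_n)\le(V+\pi)\cdot 3\pi/2$ and $\tilde\lambda_n=\exp(i\tilde\alpha_n)$; set $\alpha_n:=\tilde\alpha_n|_{\gamma_n}$.

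The crucial step will be to upgrade the BV control on each $\alpha_n$ into a sup-norm bound that is uniform in $n$. I pick a reference point $\zeta_n^*\in\gamma_n$ and, replacing $\alpha_n$ by $\alpha_n-2\pi k_n$ with $k_n:=\lfloor\alpha_n(\zeta_n^*)/(2\pi)\rfloor$, arrange $\alpha_n(\zeta_n^*)\in[0,2\pi)$ without disturbing the identity $\lambda=\exp(i\alpha_n)$. Since $\gamma_n$ is connected, $|\alpha_n(\zeta)-\alpha_n(\zeta_n^*)|\le V(\alpha_n)\le(V+\pi)\cdot 3\pi/2$ for every $\zeta\in\gamma_n$, whence $|\alpha_n(\zeta)|\le M:=2\pi+(V+\pi)\cdot 3\pi/2$ uniformly in $n$. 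I then set $\alpha_\lambda:=\alpha_n$ on each $\gamma_n$ and $\alpha_\lambda:=0$ on $E_0$, and check the four required properties: (i) $\lambda=\exp(i\alpha_\lambda)$ on $\bigcup_n\gamma_n$, hence q.e., giving \eqref{ARGUMENT}; (ii) $|\alpha_\lambda|\le M$ off the cap-zero set $E_0$, so $\alpha_\lambda\in L_c^\infty(\partial\mathbb{D})$; (iii) each restriction $\alpha_\lambda|_{\gamma_n}$ is BV, so $\alpha_\lambda\in\mathcal{CBV}(\partial\mathbb{D})$; and (iv) Borel measurability of BV functions on each arc, together with $C(E_0)=0$, yields measurability with respect to logarithmic capacity via Remark~\ref{R1}.

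The main obstacle is precisely the uniform boundedness in (ii): Proposition~\ref{P2} controls only the total variation of the argument and, as a statement on the full circle, sees the argument only modulo $2\pi$, so no pointwise bound on $\alpha$ can be extracted directly. The connectedness of each arc $\gamma_n$ is what allows me to pin a single value of $\alpha_n$ in $[0,2\pi)$ and then parlay the BV bound into a sup bound, and the hypothesis $\sup_n V_n<\infty$ built into the definition of $\mathcal{CBV}$ is exactly what makes this sup bound uniform across all $n$, so that $\alpha_\lambda$ lies in $L_c^\infty$ rather than merely being q.e.\ finite.
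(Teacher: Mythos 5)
Your proposal is correct and follows essentially the same route as the paper: extend $\lambda|_{\gamma_n}$ to a bounded-variation function on all of $\partial\mathbb{D}$, apply Proposition~\ref{P2} on each arc, shift each branch of the argument by an integer multiple of $2\pi$ to pin its value at a reference point, and use the uniform bound $\sup_n V_n<\infty$ to conclude membership in $L_c^{\infty}(\partial\mathbb{D})\cap\mathcal{CBV}(\partial\mathbb{D})$. The only (immaterial) differences are that the paper extends $\lambda$ by the constant $1$ off $\gamma_n$ rather than by monotone interpolation, and normalizes at the midpoint of $\gamma_n$ with $|\alpha_n^*|\le\pi$ rather than at an arbitrary reference point with value in $[0,2\pi)$.
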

\begin{proof}
Denote by $\lambda_n$ the function on $\partial\mathbb{D}$ that is
equal to $\lambda$ on $\gamma_n$ and to $1$ outside of $\gamma_n$.
Let $\alpha_n$ correspond to $\lambda_n$ by Proposition \ref{P2}.
Then its variation $V_n^*\le V_n\, 3\pi/2$. With no loss of
generality we may assume that $\alpha_n\equiv 0$ outside of
$\gamma_n$. Set $\alpha =\sum\limits_{n=1}\limits^{\infty}\alpha_n$.
Then $\alpha\in\mathcal{CBV}(\partial\mathbb{D})$ and
$\lambda(\zeta)=\exp\{{i\alpha}(\zeta)\}$ q.e. on $\partial{\mathbb
D}.$ Applying the corresponding shifts (divisible $2\pi$), we may
change $\alpha_n$ on $\gamma_n$ through $\alpha_n^*$ with
$|\alpha_n^*|\le\pi$ at the middle point of $\gamma_n$. Then it is
clear that the new function
$\alpha^*\in\mathcal{CBV}(\partial\mathbb{D})$ and
$\lambda(\zeta)=\exp\{{i\alpha^*}(\zeta)\}$ q.e. on
$\partial{\mathbb D}$ and, moreover, $|\alpha^*|\le\pi + V_n\,
3\pi/2$ on every $\gamma_n$, i.e. $|\alpha^*|$ is bounded on the set
$\partial\mathbb D\setminus \cup\gamma_n$. In addition, by the
construction, the function $\alpha^*$ is continuous q.e. on
$\partial{\mathbb D}.$  Hence $\alpha^*\in
L_c^{\infty}(\partial\mathbb D)$.
\end{proof}$\Box$

\bigskip

We say that a Jordan curve $\Gamma$ in $\mathbb C$ is {\it almost
smooth} if $\Gamma$ has a tangent quasi--everywhere. Here we say
that a straight line $L$ in $\mathbb C$ is {\it tangent} to
$\Gamma$ at a point $z_0\in\Gamma$ if
\begin{equation} \label{eqTANGENT}
\limsup\limits_{z\to z_0, z\in \Gamma}\ \frac{\hbox{dist}\, (z,
L)}{|z-z_0|}\ =\ 0\ .
\end{equation}
In particular, $\Gamma$ is almost smooth if $\Gamma$ has a tangent
at all its points except a countable set. The nature of such Jordan
curves $\Gamma$ is complicated enough because the countable set can
be everywhere dense in $\Gamma$.

\bigskip

\begin{remark}\label{RBP}
By Corollary of Theorem 1  in \cite{BP}, a conformal mapping of a
Jordan domain $D$ in $\mathbb C$ with the quasihyperbolic boundary
condition, see the definition in Section 3, onto the unit disk
$\mathbb D,$ as well as its inverse are H\"older continuous in the
closure of $D$ and $\mathbb D$, respectively. Thus, by Remark
\ref{R0} these mappings keep the sets of the logarithmic capacity
zero on boundaries of $D$ and $\mathbb D$. Consequently, by Remark
\ref{R1}, such mappings also keep boundary functions which are
measurable with respect to the logarithmic capacity. These facts
are key for the research of the boundary value problems in the
given domains.
\end{remark}

\section{On domains with quasihyperbolic boundary condition}

Let $D$ be a domain in $\mathbb C$. As usual, here $k_D(z,z_0)$
denotes the {\it quasihyperbolic distance} between points $z$ and
$z_0$ in $D$,
\begin{equation} \label{eqHYPERD}
k_D(z,z_0)\ :=\ \inf\limits_{\gamma} \int\limits_{\gamma}
\frac{ds}{d(\zeta,\partial D)}\ ,
\end{equation}
introduced in the paper \cite{GP}, see also the monographs
\cite{AVV} and \cite{Vu}. Here $d(\zeta,\partial D)$ denotes the
Euclidean distance from the point $\zeta\in D$ to $\partial D$ and
the infimum is taken over all rectifiable curves $\gamma$ joining
the points $z$ and $z_0$ in $D$.

Further, it is said that a domain $D$ satisfies the {\it
quasihyperbolic boundary condition} if
\begin{equation} \label{eqHYPERB}
k_D(z,z_0)\ \le\ a\,\ln \frac{d(z_0,\partial D)}{d(z ,\partial
D)}\, +\, b\ \ \ \ \ \ \ \ \ \forall\ z\in D
\end{equation}
for constants $a$ and $b$ and a point $z_0\in D$. The latter notion
was introduced in \cite{GM} but, before it, was first applied in
\cite{BP}.

\bigskip

\begin{remark}\label{RB}
Quasidisks $D$ satisfy the quasihyperbolic boundary condition.
Indeed, as well--known the Riemann conformal mapping $\omega : D\to
\mathbb D$ is extended to a quasiconformal mapping of $\mathbb C$
onto itself, see e.g. Theorem II.8.3 in \cite{LV}. By one of the
main Bojarski results, see \cite{Bo} and \cite{Bo^*}, Theorem 3.5,
the derivatives of quasiconformal mappings in the plane are locally
integrable with some power $q>2$. Note also that its Jacobian
$J(w)=|\omega_w|^2-|\omega_{\bar{w}}|^2$, see e.g. I.A(9) in
\cite{Alf}. Consequently, in this case $J\in L^p(D)$ for some $p>1$
and we have the desired conclusion by the criterion in \cite{AK},
Theorem 2.4.
\end{remark}

\bigskip

Recall that a domain $D$ in ${\mathbb R}^n$, $n\ge 2$, is called
satisfying {\it (A)--condition} if
\begin{equation} \label{eqA}
\hbox{mes}\ D\cap B(\zeta,\rho)\ \le\ \Theta_0\,\hbox{mes}\
B(\zeta,\rho)\ \ \ \ \ \ \ \ \forall\ \zeta\in\partial D\ ,\
\rho\le\rho_0
\end{equation}
for some $\Theta_0$ and $\rho_0\in(0,1)$, see 1.1.3 in \cite{LU}.
Recall also that a domain $D$ in ${\mathbb R}^n$, $n\ge 2$, is said
to be satisfying the {\it outer cone condition} if there is a cone
that makes possible to be touched by its top to every boundary point
of $D$ from the completion of $D$ after its suitable rotations and
shifts. It is clear that the outer cone condition implies
(A)--condition. It is well known that the above conditions are standard in the theory of boundary value
problems for the partial differential equations.

\bigskip

\begin{remark}\label{A}
Note that quasidisks $D$ satisfy (A)--condition. Indeed, the
quasidisks are the so--called $QED-$domains by Gehring--Martio, see
Theorem 2.22 in \cite{GM1}, and the latter satisfy the condition
\begin{equation} \label{eqQ} \hbox{mes}\ D\cap B(\zeta,\rho)\ \ge\
\Theta_*\,\hbox{mes}\ B(\zeta,\rho)\ \ \ \ \ \ \ \ \forall\
\zeta\in\partial D\ ,\ \rho\le\hbox{diam}\, D
\end{equation}
for some $\Theta_*\in(0,1)$, see Lemma 2.13 in \cite{GM1}, and
quasidisks (as domains with quasihyperbolic boundary condition) have
boundaries of the Lebesgue measure zero, see e.g. Theorem 2.4 in
\cite{AK}. Thus, it remains to note that, by definition, the
completions of quasidisks $D$ in the the extended complex plane
$\overline{\mathbb C}:=\mathbb C\cup\{\infty\}$ are also quasidisks
up to the inversion with respect to a circle in $D$. \end{remark}

\bigskip

As we know, the first example of a simply connected plane domain $D$
with the quasihyperbolic boundary condition which is not a quasidisk
was constructed in \cite{BP}, Theorem 2. However, this domain had
(A)--condition.

\bigskip

\begin{remark}\label{RS} Probably one of the simplest examples of a domain $D$
with the quasihyperbolic boundary condition and without
(A)--condition is the union of 3 open disks with the radius 1
centered at the points $0$ and $1\pm i$. It is clear that the domain
has zero interior angle at its boundary point $1$ and by Remark
\ref{A} it is not a quasidisk. Note that $\partial D$ is almost
smooth. Thus, there exist almost smooth Jordan curves with the
quasihyperbolic boundary condition that are not quasiconformal
curves.
\end{remark}

\bigskip

From now on we will naturally assume that the boundary Jordan
curves $\Gamma:=\partial D$ are almost smooth.


\section{Boundary correlation of conjugate harmonic functions}

It is known the very delicate observation due to Lusin that harmonic
functions in the unit circle with continuous (even absolutely
continuous !) boundary data can have conjugate harmonic functions
whose boundary data are not continuous functions, furthemore, they
can be even not essentially bounded in neighborhoods of each point
of the unit circle, see e.g. Theorem VIII.13.1 in \cite{Bari}. Thus,
a correlation between boundary data of conjugate harmonic functions
is not a simple matter, see e.g. I.E in \cite{Ku}, see also
\cite{R4} and \cite{R5}.

\bigskip

The following statement was first proved for the case of bounded
variation in \cite{ER}. Here we give an alternative proof of this
significant fact and extend it to the case of countable bounded
variation.

\medskip

\begin{lemma}\label{T3C} {\it
Let $\alpha:\partial\mathbb D\to\mathbb R$ be in the class
$L_c^{\infty}(\partial\mathbb
D)\cap\mathcal{CBV}(\partial\mathbb{D})$ and let $u:\mathbb
D\to\mathbb R$ be a bounded harmonic function such that
\begin{equation}\label{23C}
\lim\limits_{z\to\zeta}\ u(z)\ =\ \alpha(\zeta)
\end{equation}
at every point of continuity of $\alpha$ and let $v$ be its
conjugate harmonic function. Then $v$ has the angular limit
\begin{equation}\label{24C}
\lim\limits_{z\to\zeta}\ v(z)\ =\ \beta(\zeta)\,\,\,\,\,\,\mbox{
q.e. on ${\partial{\mathbb D}}$},
\end{equation}
where the function $\beta:\partial\mathbb D\to\mathbb R$ is
measurable with respect to the logarithmic capacity.}
\end{lemma}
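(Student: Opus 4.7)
The plan is to localize using the $\mathcal{CBV}$ structure, apply the bounded-variation case to a truncated datum, and transfer the conclusion back to $v$ via Schwarz reflection. First I fix mutually disjoint arcs $\gamma_n\subset\partial\mathbb{D}$ on each of which $\alpha$ has bounded variation $V_n$ with $\sup_n V_n<\infty$, and $E_0:=\partial\mathbb{D}\setminus\bigcup_n\gamma_n$ of logarithmic capacity zero. Each $\alpha|_{\gamma_n}$, being $\mathcal{BV}$, is continuous except on a countable subset $D_n\subset\gamma_n$, hence on a set of capacity zero.

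Fix $n$ and set $\alpha_n^{\#}:=\alpha\,\chi_{\gamma_n}$, which lies in $\mathcal{BV}(\partial\mathbb{D})$ with variation at most $V_n+2\|\alpha\|_{L_c^{\infty}}$. Let $u_n^{\#}$ be the Poisson integral of $\alpha_n^{\#}$ and $v_n^{\#}$ its harmonic conjugate; by the bounded-variation case already established in \cite{ER}, $v_n^{\#}$ possesses an angular limit $\beta_n^{\#}$ q.e.\ on $\partial\mathbb{D}$, and $\beta_n^{\#}$ is measurable with respect to the logarithmic capacity. On any open subarc $\sigma\subset\gamma_n\setminus D_n$ the function $\alpha$ is continuous and coincides with $\alpha_n^{\#}$, so standard Poisson-integral theory makes both $u$ and $u_n^{\#}$ continuous on $\mathbb{D}\cup\sigma$ with identical boundary values $\alpha|_{\sigma}$. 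Consequently $w_n:=u-u_n^{\#}$ is a bounded harmonic function on $\mathbb{D}$, continuous on $\mathbb{D}\cup\sigma$, and identically zero on $\sigma$. The Schwarz reflection principle extends $w_n$ harmonically across $\sigma$, and hence its harmonic conjugate $v-v_n^{\#}$ extends continuously (indeed harmonically, modulo an additive constant) across $\sigma$ as well.

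Therefore the angular limit of $v$ exists at $\zeta\in\sigma$ precisely when that of $v_n^{\#}$ does, and the latter holds q.e. Writing $\gamma_n\setminus D_n$ as a countable union of such open arcs $\sigma$ and taking the union over $n$ produces angular limits of $v$ q.e.\ on $\bigcup_n\gamma_n$; since $C(E_0)=0$ we obtain $\beta(\zeta):=\lim_{z\to\zeta}v(z)$ q.e.\ on $\partial\mathbb{D}$. Measurability of $\beta$ with respect to the logarithmic capacity follows because, on the measurable set where the limit exists, $\beta(\zeta)=\lim_{k\to\infty}v\bigl((1-2^{-k})\zeta\bigr)$ is a pointwise limit of continuous functions. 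The main obstacle is the continuous boundary extension of $w_n$ on the entire open arc $\sigma$: the hypothesis only prescribes $u$ at individual continuity points of $\alpha$, so one must verify that a bounded harmonic function whose angular limit function is continuous on an open subarc attains those values continuously up to the arc — a standard but delicate Poisson-integral fact that is exactly what licenses the reflection step and, through it, the reduction to the bounded-variation case.
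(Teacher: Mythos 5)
Your argument is correct, and its skeleton coincides with the paper's: decompose $\alpha$ into $\alpha\chi_{\gamma_n}$ plus a remainder supported off the arc, apply the bounded-variation case to the first piece, and show the second piece contributes something continuous up to the open arc, then take the countable union. The differences lie in the two ingredients. For the $\mathcal{BV}$ case you simply cite \cite{ER}, whereas the paper re-proves it by rewriting the Schwartz integral as a Cauchy-type integral of $F(t)=e^{-it}\alpha(e^{it})$ and invoking Twomey's theorem \cite{T} on angular limits of Cauchy integrals with $\mathcal{BV}$ densities. For the remainder, the paper takes $v_0$ to be the conjugate Poisson integral of $\alpha-\alpha\chi_{\gamma_*}$ and reads off its continuity on the open arc directly from the conjugate Poisson kernel; you instead form $w_n=u-u_n^{\#}$ and extend it by Schwarz reflection across $\sigma$, which then extends the conjugate $v-v_n^{\#}$ as well. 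Your route has the small advantage that you never need to identify $u$ with the Poisson integral of $\alpha$ --- the paper secures that identification via the generalized maximum principle, using that the discontinuity set of $\alpha$ has zero logarithmic capacity --- since the reflection only requires $w_n$ to vanish continuously on $\sigma$. The ``main obstacle'' you flag at the end is in fact not one: condition (\ref{23C}) is an unrestricted limit, not merely an angular one, and a bounded harmonic function admitting unrestricted limits at every point of an open arc, with the limit function continuous there, extends continuously to that arc by a routine $\varepsilon$--$\delta$ argument; so the reflection step is fully licensed. (A cosmetic point: the variation of $\alpha\chi_{\gamma_n}$ is bounded by $V_n$ plus twice $\sup_{\gamma_n}|\alpha|$, which is finite since $\alpha|_{\gamma_n}\in\mathcal{BV}$, though not literally by $V_n+2\|\alpha\|_{L_c^{\infty}}$ if the endpoints are exceptional; this does not affect anything.)
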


\bigskip

\begin{proof} Let us start from the case $\alpha\in\mathcal{BV}(\partial\mathbb{D}).$
In this case $\alpha$ has at most a countable set $S$ of points of
discontinuity and, consequently, $S$ is of zero logarithmic
capacity. Hence by the generalized maximum principle, see e.g. the
point 115 in \cite{N}, such a function $u$ is unique and, thus, $u$
can be represented as the Poisson integral of the function $\alpha$,
see e.g. Theorem I.D.2.2 in \cite{Ku},
\begin{equation}\label{PoissonV}
u(re^{i\vartheta})\ =\ \frac{1}{2\pi}\
\int\limits_{-\pi}\limits^{\pi}\frac{1-r^2}{1-2r\cos(\vartheta
-t)+r^2}\ \alpha(e^{it})\ dt\ .
\end{equation}
Here the Poisson kernel is a real part of the analytic function
$(\zeta +z)/(\zeta -z)$, $\zeta = e^{it}$, $z=re^{i\vartheta}$, and
by the Weierstrass theorem, see e.g. Theorem 1.1.1 in \cite{Go}, the
Schwartz integral
\begin{equation}\label{PoissonW}
f(z)\ :=\ \frac{1}{2\pi i}\ \int\limits_{\partial\mathbb D}
\alpha(\zeta)\ \frac{\zeta +z}{\zeta -z}\ \frac{d\zeta}{\zeta}
\end{equation}
gives the analytic function $f=u+iv$ in $\mathbb D$ with  $u={\rm
Re}\, f$, $v={\rm Im}\, f$, and
\begin{equation}\label{PoissonT}
f(z)\ =\ \frac{1}{2\pi}\ \int\limits_{-\pi}\limits^{\pi}
\alpha(e^{it})\ \frac{e^{it} +z}{e^{it} -z}\ dt\ =\ C\ +\
\frac{z}{\pi}\ \int\limits_{-\pi}\limits^{\pi}
\frac{F(t)}{1-e^{-it}z}\ dt
\end{equation}
where $F(t)=e^{-it}\alpha(e^{it})$ and $C=\frac{1}{2\pi}
\int\limits_{-\pi}\limits^{\pi} \alpha(e^{it})\ dt$. By Theorem 2(c)
in \cite{T} the function $f(z)$ has angular limits  $f(\zeta)$ as
$z\to\zeta$ q.e. on $\partial\mathbb D$ because the function $F$ is
of bounded variation. It remains to note that
$f(\zeta)=\lim\limits_{n\to\infty}\, f_n(\zeta)$, where
$f_n(\zeta)=f(r_n\zeta)$, for an arbitrary sequence $r_n\to 1-0$ as
$n\to \infty$ q.e. on $\partial\mathbb D$ and, thus, $f(\zeta)$ is
measurable with respect to logarithmic capacity because the
functions $f_n(\zeta)$ are so as continuous functions on
$\partial\mathbb D$, see e.g. 2.3.10 in \cite{Fe}.

\bigskip

Now, let $\alpha\in\mathcal{CBV}(\partial\mathbb{D})$. Then its set
of points of discontinuity is at most of zero logarithmic capacity.
Hence again by the generalized maximum principle the bounded
function $u$ satisfying (\ref{23C}) is unique. Moreover, $\alpha\in
L_c^{\infty}(\partial\mathbb D)$ and, consequently, $u$ can be
represented by the Poisson integral (\ref{PoissonV}) and the
Schwartz integral (\ref{PoissonW}) gives the analytic function
$f=u+iv$ in $\mathbb D$, where
\begin{equation}\label{PoissonVC}
v(re^{i\vartheta})\ =\ \frac{1}{2\pi}\
\int\limits_{-\pi}\limits^{\pi}\frac{2r\sin(\vartheta-t)}{1-2r\cos(\vartheta-t)+r^2}\
\alpha(e^{it})\ dt\ .
\end{equation}

\bigskip

Let us apply the linearity of the integral operator
(\ref{PoissonVC}). Namely, denote by $\chi$ the characteristic
function of an arc $\gamma_*$ of $\partial\mathbb D$ where $\alpha$
is of bounded variation from the definition of $\mathcal{CBV}$.
Setting $\alpha_*=\alpha\,\chi$ and $\alpha_0=\alpha -\alpha_*$, we
have that $\alpha =\alpha_*+\alpha_0$. Then $v=v_*+v_0$ where $v_*$
and $v_0$ correspond to $\alpha_*$ and $\alpha_0$ by formula
(\ref{PoissonVC}). By the first item of the proof, there exists the
angular limit $\lim\limits_{z\to\zeta}\ v_*(z)\ =\ \beta_*(\zeta)$
q.e. on $\partial\mathbb D$ where $\beta_*:\partial\mathbb
D\to\mathbb R$ is a measurable function with respect to the
logarithmic capacity. Moreover, it is evident from formula
(\ref{PoissonVC}) that $v_0(z)\to \beta_0(\zeta)$ as $z\to\zeta$ for
all $\zeta\in\gamma_*$ where $\beta_0:\gamma_*\to\mathbb R$ is
continuous on $\gamma_*$. Thus, setting $\beta=\beta_*+\beta_0$ on
$\gamma_*$, we obtain the conclusion of Lemma \ref{T3C}, because the
collection of such arcs $\gamma_*$ is  countable  and the completion
of this collection on $\partial\mathbb D$ has zero logarithmic
capacity.
\end{proof}$\Box$

\bigskip

\section{The Hilbert problem for analytic functions in the disk}

\bigskip

Now we are ready to give a solution to the Hilbert boundary value
problem for analytic functions in the unit disk, assuming that the
coefficient $\lambda$ is of countable bounded variation and the
boundary date $\varphi$ is measurable with respect to the
logarithmic capacity.

\bigskip

\begin{theorem}\label{T4} {\it
Let $\lambda:\partial\mathbb D\to\partial\mathbb D$ be in the
class $\mathcal{CBV}(\partial\mathbb{D})$ and
$\varphi:\partial\mathbb D\to\mathbb R$ be measurable with respect
to the logarithmic capacity. Then there is an analytic function
$f:\mathbb D\to\mathbb C$ that has the angular limit
\begin{equation}\label{25}
\lim\limits_{z\to\zeta}\ \mathrm {Re}[\overline{\lambda(\zeta)}
f(z)]\ =\ \varphi(\zeta) \quad\quad\quad\ \ \mbox{q.e. on
$\partial\mathbb D.$}
\end{equation}}
\end{theorem}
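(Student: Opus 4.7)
My strategy is to factor out the boundary ``twist'' produced by $\lambda$, reducing the problem to constructing an analytic function with prescribed measurable real-part angular limits. By Proposition \ref{P22}, pick $\alpha:=\alpha_\lambda\in L_c^{\infty}(\partial\mathbb D)\cap\mathcal{CBV}(\partial\mathbb D)$ with $\lambda(\zeta)=\exp\{i\alpha(\zeta)\}$ q.e.\ on $\partial\mathbb D$, and let $u$ be the Poisson integral of $\alpha$ in $\mathbb D$. Then $u$ is bounded harmonic with $u(z)\to\alpha(\zeta)$ at each continuity point of $\alpha$, so by Lemma \ref{T3C} its conjugate harmonic function $v$ admits angular limits $\beta(\zeta)$ q.e.\ on $\partial\mathbb D$, with $\beta$ measurable with respect to logarithmic capacity. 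Set $A:=u+iv$ and $h(z):=\exp(iA(z))=e^{-v(z)}\exp(iu(z))$; this ``twist factor'' is analytic in $\mathbb D$ and
$$\lim_{z\to\zeta}\,\overline{\lambda(\zeta)}\, h(z)\ =\ e^{-\beta(\zeta)}\qquad \text{q.e.\ on }\partial\mathbb D,$$
a real, positive angular limit.

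Next, I seek $f$ in the multiplicative form $f=hG$, where $G$ is analytic in $\mathbb D$ to be chosen. Assuming $G$ admits angular limit $G(\zeta)$ q.e.\ on $\partial\mathbb D$, so that $G$ is bounded in a Stolz angle at q.e.\ $\zeta$, one obtains
$$\lim_{z\to\zeta}\,\mathrm{Re}\bigl[\overline{\lambda(\zeta)}\,f(z)\bigr]\ =\ e^{-\beta(\zeta)}\,\mathrm{Re}\,G(\zeta)\qquad \text{q.e.\ on }\partial\mathbb D,$$
and (\ref{25}) holds precisely when $\mathrm{Re}\,G(\zeta)=e^{\beta(\zeta)}\varphi(\zeta)=:\psi(\zeta)$ q.e. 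Since both $\beta$ and $\varphi$ are measurable with respect to logarithmic capacity, so is the real-valued (but generally unbounded and non-integrable) function $\psi$.

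The last and most delicate step is to produce such a $G$: an analytic function in $\mathbb D$ with q.e.\ angular limits whose real part equals $\psi$ q.e. Classical Poisson/Schwarz representations do not apply directly, so the plan is to invoke a Luzin-type construction adapted to logarithmic capacity, in the spirit of the authors' previous work, cf.\ \cite{R4}, \cite{R5}, \cite{ER}: exhaust $\partial\mathbb D$ by an increasing sequence of compact sets $K_n$ with $C(\partial\mathbb D\setminus K_n)\to 0$ on which $\psi$ is continuous and bounded, extend to bounded continuous functions $\psi_n$ on $\partial\mathbb D$, form their Schwarz integrals $G_n$, and pass to a suitable limit $G$. The main obstacle is precisely this step: for data that is only measurable with respect to logarithmic capacity, one must simultaneously secure convergence of the real parts to $\psi$ q.e.\ and the existence of the full angular limit of $G$ q.e., which is where the quasiconformal invariance and the Luzin--Egorov-type behaviour of logarithmic capacity become indispensable (contrast with Lebesgue measure, which lacks the former by the Ahlfors--Beurling example). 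Once $G$ is in hand, $f:=hG$ is the desired analytic solution of (\ref{25}).
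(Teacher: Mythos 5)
Your factorization is exactly the paper's: you take $\alpha_\lambda$ from Proposition \ref{P22}, form the Schwarz integral $g=u+iv$ of $\alpha_\lambda$, use Lemma \ref{T3C} to obtain the angular limits $\beta$ of $v$ q.e., and set the twist factor $h=\exp(ig)$ (the paper's ${\cal A}$), so that $\overline{\lambda(\zeta)}\,h(z)\to e^{-\beta(\zeta)}>0$ q.e. and the problem reduces to finding an analytic $G$ with $\mathrm{Re}\,G\to e^{\beta}\varphi$ q.e. Up to this point your argument matches the paper's proof line for line, and your explicit remark that one needs the \emph{full} angular limit of $G$ (not merely of its real part) in order to pass the limit through the product $hG$ is a genuine subtlety that the paper compresses into the phrase ``an elementary computation.''

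The gap is the step you yourself flag as the main obstacle: the existence of an analytic $G$ in $\mathbb D$ possessing angular limits q.e. whose real part has the prescribed boundary values $\psi=e^{\beta}\varphi$, where $\psi$ is merely measurable with respect to logarithmic capacity (and in general unbounded and non-integrable). You sketch a Luzin-type exhaustion of $\partial\mathbb D$ by compacta on which $\psi$ is continuous, followed by Schwarz integrals and a limit passage, but you do not carry this out, and as written the sketch controls neither the q.e.\ nontangential convergence of $\mathrm{Re}\,G_n$ to $\psi$ nor the existence of the angular limit of $G$ itself. This missing existence result is precisely Corollary 4.1 in \cite{ER}, which the paper simply invokes at this point to produce its function ${\cal B}$ with $\mathrm{Re}\,{\cal B}(z)\to\varphi(\zeta)\exp\{\beta(\zeta)\}$ q.e.; that corollary also supplies the full angular limit of ${\cal B}$, which is what legitimizes the final product computation. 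So your reduction is correct and identical to the paper's, but the proof is incomplete until this Dirichlet-type existence theorem for boundary data measurable with respect to logarithmic capacity is either cited or proved.
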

\begin{proof}
By Proposition \ref{P22}, the function $\alpha_{\lambda}\in
L_c^{\infty}(\partial\mathbb{D})\cap\mathcal{CBV}(\partial\mathbb{D})$.
Therefore
$$ g(z)\ :=\ \frac{1}{2\pi i}\ \int\limits_{\partial\mathbb
D}\alpha_{\lambda}(\zeta)\ \frac{z+\zeta}{z-\zeta}\
 \frac{d\zeta}{\zeta}\ , \ \ \ \ \ z\in\mathbb D\ ,
$$
is  analytic function with $u(z)={\mathrm Re}\
g(z)\to\alpha_{\lambda}(\zeta)$ as $z\to\zeta$ for every
$\zeta\in\partial\mathbb D$ except a set of the discontinuity
points for the function
  $\alpha_{\lambda},$ which has zero  logarithmic capacity,
see e.g. Corollary IX.1.1 in \cite{Go} and Theorem I.D.2.2 in
\cite{Ku}. Note that the function ${\cal A}(z):=\exp\{ig(z)\}$ is
also analytic.

By Lemma \ref{T3C} there is a function $\beta:\partial\mathbb
D\to\mathbb R$  that has the angular limit $v(z)={\mathrm {Im}}\
g(z)\to\beta(\zeta)$ as $z\to\zeta$  q.e. on $\partial\mathbb D$ and
$\beta$ is measurable with respect to the logarithmic capacity.
Thus, by Corollary 4.1 in \cite{ER} there exists an analytic
function ${\cal B}:\mathbb D\to\mathbb C$ that has the angular limit
$U(z)=\mathrm {Re}\ {\cal B}(z)\to\varphi(\zeta)\,
\exp\{{\beta(\zeta)}\}$ as $z\to\zeta$  q.e. on $\partial\mathbb D.$
Finally, an elementary computation shows that the desired function
has the form $f={\cal A}\,{\cal B}$.
\end{proof}$\Box$

\bigskip

\section{The Hilbert problem for the Beltrami equation}


We say that a function $f:{D}\to{\Bbb C}$ is a {\it regular
solution of the Beltrami equation}~\eqref{1} if $f$ is continuous,
discrete and open, has the first generalized derivatives  and
satisfies~\eqref{1} a.e. in $D.$ We also say that $f$ is a {\it
regular solution of the Hilbert boundary value problem}~\eqref{2}
for the Beltrami equation~\eqref{1} if $f$ in addition
satisfies~\eqref{2}  q.e. on $\partial D$  along nontangential
paths in $D.$

Recall that a mapping $f:{D}\to{\Bbb C}$ is called {\it discrete}
if the pre-image $f^{-1}(z)$ consists of isolated points for every
$z\in \Bbb C$, and {\it open} if $f$ maps every open set
$U\subseteq D$ onto an open set in $\Bbb C$. By the known Stoilow
result, see e.g. \cite{St}, every regular solution $f$ of
(\ref{1}) has the representation $f=h\circ g$ where $g$ is a
homeomorphic solution of (\ref{1}) and $h$ is an analytic
function.

\medskip

\begin{theorem}\label{T6} {\it
Let $D$ be  a Jordan domain with the quasihyperbolic boundary
condition and let $\partial D$ have a tangent q.e. Suppose that
$\mu:D\to\mathbb{C}$ is in $L^\infty(D)$ with $||\mu||_{\infty}<1$,
$\lambda:\partial D\to\mathbb{C},\: |\lambda(\zeta)|\equiv1$, is in
$\mathcal{CBV}(\partial{D})$ and $\varphi:\partial D\to\mathbb{R}$
is a measurable function with respect to the logarithmic capacity.
Then the Hilbert problem \eqref{2} for the Beltrami equation
\eqref{1} has a regular solution. }
\end{theorem}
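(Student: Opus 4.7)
The idea is Stoilow factorization: construct a quasiconformal homeomorphism $g:\overline D\to\overline{\mathbb D}$ whose restriction to $D$ solves the Beltrami equation \eqref{1}, transport the Hilbert data to the unit disk via $g$, apply the disk version Theorem \ref{T4}, and then compose back.

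\emph{Reduction to the unit disk.} Let $\omega:D\to\mathbb D$ be a Riemann conformal map; by Remark \ref{RBP} it extends to a H\"older homeomorphism $\overline D\to\overline{\mathbb D}$ preserving sets of zero logarithmic capacity. Pull the Beltrami coefficient back to $\mathbb D$ by
$$
\tilde\mu(w)\ :=\ \mu\bigl(\omega^{-1}(w)\bigr)\,\frac{\omega'\bigl(\omega^{-1}(w)\bigr)}{\overline{\omega'\bigl(\omega^{-1}(w)\bigr)}}\ ,\qquad w\in\mathbb D,
$$
so that $\|\tilde\mu\|_{\infty}=\|\mu\|_{\infty}<1$. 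Extending $\tilde\mu$ by zero off $\mathbb D$ and invoking the measurable Riemann mapping theorem, followed by a conformal normalization, produces a quasiconformal self-map $\psi:\mathbb D\to\mathbb D$ with $\psi_{\bar w}=\tilde\mu\,\psi_{w}$ that extends to a H\"older homeomorphism of $\overline{\mathbb D}$ and preserves zero logarithmic capacity sets on $\partial\mathbb D$ by Remark \ref{R0}. Setting $g:=\psi\circ\omega$, the chain rule for complex derivatives (with $\omega$ analytic) shows that $g|_D$ satisfies \eqref{1}, and both $g$ and $g^{-1}$ preserve zero logarithmic capacity sets, as well as measurability with respect to logarithmic capacity, on the boundaries.

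\emph{Solution on the disk and Stoilow composition.} Put $\tilde\lambda:=\lambda\circ g^{-1}$ and $\tilde\varphi:=\varphi\circ g^{-1}$ on $\partial\mathbb D$. Then $\tilde\varphi$ is measurable with respect to logarithmic capacity, and by Remark \ref{R4} combined with the preservation of zero-capacity sets, $\tilde\lambda\in\mathcal{CBV}(\partial\mathbb D)$. Theorem \ref{T4} yields a nonconstant analytic $h:\mathbb D\to\mathbb C$ whose angular boundary values satisfy
$$
\lim_{w\to\eta}\ \mathrm{Re}\bigl[\overline{\tilde\lambda(\eta)}\,h(w)\bigr]\ =\ \tilde\varphi(\eta)\qquad\text{q.e. on }\partial\mathbb D.
$$
Define $f:=h\circ g$. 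Then $f$ is continuous on $D$, possesses locally integrable generalized derivatives, and satisfies \eqref{1} a.e.\ in $D$; the Stoilow form $f=h\circ g$ with nonconstant analytic $h$ and homeomorphic $g$ supplies discreteness and openness. Provided that at q.e.\ $\zeta\in\partial D$ the map $g$ sends nontangential cones at $\zeta$ in $D$ to nontangential cones at $g(\zeta)$ in $\mathbb D$, the identity above at $\eta=g(\zeta)$ transports back to $\mathrm{Re}[\overline{\lambda(\zeta)}\,f(z)]\to\varphi(\zeta)$ as $z\to\zeta$ nontangentially in $D$, exhibiting $f$ as the required regular solution.

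\emph{Main obstacle.} The technical heart is precisely this angular-cone preservation by $g$ at q.e.\ boundary point of $\partial D$. The conformal factor $\omega$ respects inner cones at q.e.\ boundary point where $\partial D$ has a tangent by the classical Lindel\"of--Carath\'eodory theory. The quasiconformal factor $\psi$ is only H\"older up to the boundary, so verifying that it maps nontangential approach in $\mathbb D$ to nontangential approach in $\mathbb D$ requires finer angular-distortion estimates for planar quasiconformal mappings. Putting both together at q.e.\ tangent point of $\partial D$ is where the bulk of the work lies.
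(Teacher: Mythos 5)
Your construction is the same as the paper's: Riemann map $\omega:D\to\mathbb D$ (H\"older up to the boundary by Becker--Pommerenke, Remark \ref{RBP}), pulled-back coefficient $\nu=(\mu\,\omega'/\overline{\omega'})\circ\omega^{-1}$, a normalized quasiconformal self-map $\psi$ of $\mathbb D$ solving the $\nu$-Beltrami equation, transport of $\lambda$ and $\varphi$ through the H\"older boundary homeomorphism of $h=\psi\circ\omega$, an application of Theorem \ref{T4} in the disk, and composition $f=\mathcal A\circ h$ with Stoilow supplying openness and discreteness. All of that matches the paper step for step.

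The one place you stop short --- the ``main obstacle'' of cone preservation by the quasiconformal factor --- is exactly the point the paper closes, and it does so not by new estimates but by citation: the distortion of angles under a planar quasiconformal mapping is \emph{bounded} (Agard, Agard--Gehring, Taari, the paper's references [A], [AG], [Ta]). Concretely, the paper first extends $\psi$ (its $G$) by the reflection principle to a quasiconformal map $\tilde G$ of all of $\mathbb C$, so that boundary points of $\mathbb D$ become interior points of the domain of $\tilde G$ and the bounded angular distortion applies there: a path approaching $\eta\in\partial\mathbb D$ inside a cone is carried into a cone at $\tilde G(\eta)$. For the conformal factor the Lindel\"of theorem gives the same at every point where $\partial D$ has a tangent, hence q.e.; since $\omega$, $\psi$ and their inverses are H\"older on the boundary they preserve sets of zero logarithmic capacity, so the two exceptional sets combine into one of zero capacity. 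This is not ``the bulk of the work'' in the sense of requiring new analysis --- it is a known property of quasiconformal mappings --- but without invoking it your argument is incomplete, since H\"older continuity up to $\partial\mathbb D$ alone does not rule out a nontangential path being mapped to a tangential one. One further small point you share with the paper: for Stoilow to give discreteness and openness of $f=\mathcal A\circ h$ one needs $\mathcal A$ nonconstant, which you assert but which Theorem \ref{T4} as stated does not literally guarantee; this is harmless (one can always add a suitable nonconstant solution of the homogeneous problem, cf. the paper's Theorem \ref{T8}), but it deserves a sentence.
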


\medskip



\begin{proof}
Let $g$ be a conformal mapping of $D$ onto $\mathbb D$ that exists
by the Riemann mapping theorem, see e.g. Theorem II.2.1 in
\cite{Go}. Setting in the unit disk $\mathbb D$
\begin{equation}\label{N}
\nu(w)\ :=\
\left(\mu\,\frac{g^{\prime}}{\overline{g^{\prime}}}\right)\circ
g^{-1}(w)\ ,\end{equation} we see that $\nu\in L^\infty(\mathbb
D)$  and $\|\nu\|_{\infty}=\|\mu\|_{\infty}<1$. Hence, by the
Measurable Riemann Mapping theorem, see e.g. \cite{Alf},
\cite{BGMR} and \cite{LV}, there is a quasiconformal mapping $G$
of $\mathbb D$ onto itself, $G(0)=0$, satisfying the Beltrami
equation $G_{\bar w} = \nu(w)\, G_w$ a.e. in $\mathbb D$.

By the reflection principle, see e.g. Theorem I.8.4 in \cite{LV},
$G$ can be extended to a quasiconformal mapping $\tilde{G}$ of
$\mathbb C$ onto itself. Both functions
$G_*:=\tilde{G}|_{\partial\mathbb D}$ and $G_*^{-1}$ are H\"older
continuous, see \cite{Bo^*}, Theorem 3.5, and also \cite{LV},
Theorem II.4.3.

Now, by the Caratheodory theorem, see e.g. Theorem II.3.4 in
\cite{Go}, $g$ is extended to a homeomorphism $\tilde g$ of
$\overline{D}$ onto $\overline{\mathbb D}$. By Corollary of
Theorem 1 in \cite{BP}, $g_*:=\tilde{g}|_{\partial D}$ and its
inverse function are H\"older continuous.

Thus, the mapping $h_*:=G_*\circ g_*:\partial D\to\partial\mathbb D$
and its inverse are also H\"older continuous. In particular, then
$\Lambda :=\lambda\circ h_*^{-1}\in\mathcal{CBV}(\partial{\mathbb
D})$ and $\Phi :=\varphi\circ h_*^{-1}$ is measurable with respect
to logarithmic capacity by Remarks \ref{R0} and \ref{RBP}.

Next, by Theorem \ref{T4} there is an analytic function ${\cal
A}:\mathbb D\to\mathbb C$ that has the angular limit
\begin{equation}\label{25B}
\lim\limits_{\omega\to\eta}\ \mathrm {Re}\
\{\overline{\Lambda(\eta)}\, {\cal A}(\omega)\}\ =\ \Phi(\eta)
\quad\quad\quad \mbox{q.e. on $\partial\mathbb D$}.
\end{equation}

Setting  $h:=G\circ g$, we see, by an elementary computation, see
e.g. (1.C.1) in \cite{Alf}, that $h_z=G_w\circ g(z)\,
g^{\prime}(z)$ and $h_{\bar z}=G_{\bar w}\circ
g(z)\,\overline{g^{\prime}(z)}$ a.e. in $D$, i.e. $h$ is a
quasiconformal mapping of $D$ onto $\mathbb D$ satisfying equation
(\ref{1}) a.e. in $D$.

Let us consider the function $f:={\cal A}\circ h$. Since
$f_z={\cal A}^{\prime}\circ h(z)\, h_z$ and $f_{\bar z}={\cal
A}^{\prime}\circ h(z)\, h_{\bar z}$ a.e. in $D$, we see that $f$
satisfies the equation (\ref{1}). On the other hand, the mapping
$f$ is continuous, open and discrete, and therefore $f$ is the
regular solution of (\ref{1}). It remains to show that $f$
satisfies also the boundary condition (\ref{2}).

Indeed, by the Lindel\"of theorem, see e.g. Theorem II.C.2 in
\cite{Ku}, if $\partial D$ has a tangent at a point $\zeta$, then
$\arg\ [g(\zeta)-g(z)]-\arg\ [\zeta-z]\to\mathrm {const}$ as
$z\to\zeta$. In other words, the images under the conformal
mapping $g$ of sectors in $D$ with a vertex at $\zeta$ is
asymptotically the same as sectors in $\mathbb D$ with a vertex at
$w=g(\zeta)$. Consequently, nontangential paths in $D$ are
transformed under $g$ into nontangential paths in $\mathbb D$ and
inversely q.e. on $\partial D$ and $\partial{\mathbb D},$
respectively, because $D$ is almost smooth and $g_*$ and
$g_*^{-1}$ keep sets of logarithmic capacity zero.

Moreover, it is known that the distortion of angles under a
quasiconformal mapping is bounded, see e.g. \cite{A}, \cite{AG}
and \cite{Ta}. Hence the mapping $\tilde{G}:\mathbb C\to\mathbb C$
and its inverse also transform nontangential paths into
nontangential paths and $G_*$ and $G_*^{-1}$ keep sets of
logarithmic capacity zero. Consequently, $h:D\to\mathbb D$ and
$h^{-1}:\mathbb D\to D$ also transform nontangential paths into
nontangential paths q.e. on $\partial D$ and $\partial{\mathbb
D},$ respectively.   Thus, (\ref{25B}) implies the existence of
the angular limit (\ref{2}) q.e. on $\partial D.$
\end{proof}$\Box$

\medskip

\begin{remark}\label{H1}
 The regular solution $f$ of the Hilbert boundary value problem  for the Beltrami equation given in Theorem \ref{T6}
  has the following representation $f={\cal A}\circ G\circ g.$ Here
$g:D\to\mathbb D$ stands for a conformal mapping, $G:\mathbb
D\to\mathbb D$ is a quasiconformal mapping, normalized by $G(0)=0$
and satisfying the Beltrami equation with the coefficient $\nu$ in
(\ref{N}). Finally,  ${\cal A}:\mathbb D \to \mathbb C$ is the
analytic solution of the Hilbert problem with coefficient $\Lambda
=\lambda\circ h_*^{-1}$ and boundary data $\Phi =\varphi\circ
h_*^{-1}$, where $h=G\circ g$ and $h_*$ is the corresponding
boundary homeomorphism of $\partial D$ onto $\partial{\mathbb D}.$
\end{remark}

\section{On Dirichlet, Neumann and Poincare problems}

We reduce these boundary value problems to suitable Hilbert
problems studied above  and start with the Laplace equation. In
particular, choosing $\mu\equiv 0$ and $\lambda\equiv 1$ in
Theorem \ref{T6}, we immediately obtain the following solution of
the Dirichlet boundary value problem.

\medskip

\begin{corollary}\label{C1}
{\it Let $D$ be  a Jordan domain with the quasihyperbolic boundary
condition and let $\partial D$ have a tangent q.e.  Suppose
$\varphi:\partial D\to\mathbb{R}$ is measurable with respect to the
logarithmic capacity. Then there exists a harmonic function
$u:D\to\mathbb C$ that has the angular limit
\begin{equation}\label{3}
\lim\limits_{z\to\zeta}\ u(z)\ =\ \varphi(\zeta) \quad\quad\quad\ \
\ \ \ \ \ \mbox{
 q.e. on $\partial D.$}
\end{equation}
}
\end{corollary}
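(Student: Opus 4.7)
The plan is to derive Corollary \ref{C1} as an immediate specialization of Theorem \ref{T6} to the case $\mu\equiv 0$ and $\lambda\equiv 1$. First I would check that the hypotheses of Theorem \ref{T6} are satisfied under these choices. The boundary assumptions on $D$ transfer verbatim. The coefficient $\mu\equiv 0$ clearly lies in $L^\infty(D)$ with $\|\mu\|_\infty=0<1$. The coefficient $\lambda\equiv 1$ satisfies $|\lambda|\equiv 1$ and, being a constant, has zero variation, so it belongs to $\mathcal{BV}(\partial D)\subset\mathcal{CBV}(\partial D)$ (take the single arc $\gamma_1=\partial D$). Finally, $\varphi$ is measurable with respect to logarithmic capacity by hypothesis.

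Next I would apply Theorem \ref{T6} to obtain a regular solution $f:D\to\mathbb C$ of the Beltrami equation $f_{\bar z}=0$ satisfying the angular limit
\[
\lim_{z\to\zeta}\ \mathrm{Re}\{\overline{\lambda(\zeta)}\,f(z)\}\ =\ \lim_{z\to\zeta}\ \mathrm{Re}\,f(z)\ =\ \varphi(\zeta)
\quad\text{q.e.\ on }\partial D.
\]
Because $\mu\equiv 0$, the Beltrami equation reduces to the Cauchy--Riemann equation, and the regular solution $f$, which by definition is continuous, open, discrete and has first generalized derivatives with $f_{\bar z}=0$ a.e., is therefore analytic in $D$ (this also follows directly from the Stoilow-type representation $f=h\circ g$ used in Theorem \ref{T6}, since for $\nu\equiv 0$ the map $G:\mathbb D\to\mathbb D$ normalized by $G(0)=0$ must be the identity, so $f=\mathcal A\circ g$ is a composition of analytic maps).

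Finally I would set $u:=\mathrm{Re}\,f$. As the real part of an analytic function in $D$, $u$ is harmonic in $D$, and the angular limit displayed above gives $u(z)\to\varphi(\zeta)$ along nontangential paths q.e.\ on $\partial D$, which is the desired conclusion.

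No serious obstacle appears here since all heavy lifting has already been done in Theorem \ref{T6}; the only point worth a line of verification is the trivial membership $\lambda\equiv 1\in\mathcal{CBV}(\partial D)$ and the observation that $\mu\equiv 0$ forces the intermediate quasiconformal factor in the proof of Theorem \ref{T6} to be the identity, so that the constructed $f$ is genuinely holomorphic and $\mathrm{Re}\,f$ is genuinely harmonic rather than merely a real part of a quasiconformal function.
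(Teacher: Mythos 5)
Your proposal is correct and follows exactly the paper's route: the paper derives Corollary \ref{C1} in one line by setting $\mu\equiv 0$ and $\lambda\equiv 1$ in Theorem \ref{T6}, and you have merely (usefully) filled in the routine verifications. One negligible quibble: with only the normalization $G(0)=0$ the map $G$ for $\nu\equiv 0$ need not be the identity (it could be a rotation), but this does not matter since $f=\mathcal{A}\circ G\circ g$ is analytic in any case and $u=\mathrm{Re}\, f$ is harmonic.
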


We proceed to the study of nonclassical solutions of the Neumann
boundary value problem. For this goal, we will study the more
general {\it problem on directional derivatives}, that in turn is a
partial case of the Poincare boundary value problem.
\par
First of all, let us recall the classical setting of the  problem on
directional derivatives for the Laplace equation in the unit disk
$\mathbb D:$ To find a twice con\-ti\-nu\-ous\-ly differentiable
function $u:\mathbb D\to\mathbb R$ that  admits a continuous
extension to the boundary $\partial{\mathbb D}$ together with its
first partial derivatives, satisfies the Laplace equation
\begin{equation}\label{eqLAPLACE}
\Delta u\ :=\ \frac{\partial^2 u}{\partial x^2}\ +\ \frac{\partial^2
u}{\partial y^2}\ =\ 0 \quad\quad\quad\forall\ z\in\mathbb D
\end{equation}
and the boundary condition
\begin{equation}\label{eqDIRECT}
\frac{\partial u}{\partial \nu}\ =\ \varphi(\zeta) \quad\quad\quad
\forall\ \zeta\in\partial {\mathbb D}.
\end{equation}
Here $\varphi : \partial\mathbb D\to\mathbb R$ stands for a
prescribed continuous function  and $\frac{\partial u}{\partial
\nu}$ denotes the derivative of $u$ at the point $\zeta$ in the
direction $\nu = \nu(\zeta)$, $|\nu(\zeta)|=1$, i.e.,
\begin{equation}\label{eqDERIVATIVE}
\frac{\partial u}{\partial \nu}\ :=\ \lim_{t\to 0}\
\frac{u(\zeta+t\,\nu)-u(\zeta)}{t}\ .
\end{equation}

The  Neumann boundary value problem for the Laplace equation is a
special case of the above problem with the following boundary
condition
\begin{equation}\label{eqNEUMANN}
\frac{\partial u}{\partial n}\ =\ \varphi(\zeta) \quad\quad\quad
\forall\ \zeta\in\partial\mathbb D\ ,
\end{equation}
where $n$ denotes the unit interior normal to $\partial\mathbb D$ at
the point $\zeta$.
\par
Let us note that the above problem on directional derivatives is a
partial case of the {\it Poincare  boundary value problem}
\begin{equation}\label{eqPOINCARE}
a\, u\ +\ b\,\frac{\partial u}{\partial \nu}\ =\ \varphi(\zeta)
\quad\quad\quad \forall\ \zeta\in\partial\mathbb D
\end{equation}
where $a=a(\zeta)$ and $b=b(\zeta)$ are real-valued functions given
on $\partial\mathbb D$.
\par
It is well known, that the Neumann problem, in general, has no
classical solution. The necessary condition for the  solvability
is  that the integral of the function $\varphi$ over
$\partial\mathbb D$ is equal zero, see e.g. \cite{M}. Recently, it
was established the existence of nonclassical solutions of the
Neumann problem  for the Laplace equation in rectifiable Jordan
domains for arbitrary measurable data with respect to the natural
parameter, see \cite{R3}. Then the results have been extended to
linear divergence equations in Lipschitz domains  with arbitrary
measurable data with respect to the logarithmic capacity, see
\cite{Ye}. Here we extend the corresponding results to wider
classes of domains and boundary functions.
\bigskip

\begin{theorem}\label{T1} {\it Let $D$ be  a Jordan domain with the quasihyperbolic boundary
condition and let $\partial D$ have a tangent q.e. Suppose that
$\nu:\partial D\to\mathbb{C},\: |\nu(\zeta)|\equiv 1$, is in the
class $\mathcal{CBV}$ and $\varphi:\partial D\to\mathbb{R}$ is
measurable with respect to the logarithmic capacity. Then there
exists a harmonic function $u: D\to\mathbb{R}$ that has the angular
limit
\begin{equation}\label{6.6}
\lim_{z\to\zeta}\ \frac{\partial u}{\partial\nu}\ =\varphi(\zeta)\ \
\ \ \ \ \mbox{q.e. on\ $\partial D$.}  \end{equation} }
\end{theorem}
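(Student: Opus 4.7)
The plan is to reduce the directional-derivative problem to the Hilbert problem for analytic functions in $D$, which is already settled as the $\mu\equiv 0$ special case of Theorem \ref{T6}, and then recover $u$ by antidifferentiation.

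First I record the algebraic identity: for any $u\in C^1(D,\mathbb R)$ and any $\nu=\nu_1+i\nu_2\in\mathbb C$ with $|\nu|=1$, the complex gradient $F:=u_x-iu_y$ satisfies
\[
\mathrm{Re}(\nu\,F)\;=\;\nu_1 u_x + \nu_2 u_y\;=\;\frac{\partial u}{\partial \nu}.
\]
When $u$ is harmonic, $F=2\,\partial u/\partial z$ is holomorphic in $D$. Hence, setting $\lambda:=\overline{\nu}$, the sought condition (\ref{6.6}) coincides with the Hilbert angular-limit condition
\[
\lim_{z\to\zeta}\mathrm{Re}\{\overline{\lambda(\zeta)}\,F(z)\}\;=\;\varphi(\zeta)\quad\text{q.e. on }\partial D
\]
for the yet unknown analytic function $F$.

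Next, I verify the hypotheses of Theorem \ref{T6} with this $\lambda=\overline{\nu}$, boundary data $\varphi$, and Beltrami coefficient $\mu\equiv 0$. Clearly $|\lambda|\equiv 1$, and since complex conjugation is an isometry of $\mathbb C$, it leaves each variation sum in \eqref{20} unchanged on every arc $\gamma_n$ of a $\mathcal{CBV}$-decomposition of $\nu$; thus $\lambda\in\mathcal{CBV}(\partial D)$. All remaining hypotheses on $\varphi$, $\partial D$ and $D$ are inherited verbatim. Applying Theorem \ref{T6} produces an analytic function $F:D\to\mathbb C$ that satisfies the displayed Hilbert condition q.e. along nontangential paths.

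Finally, I construct $u$ by integration. Since $D$ is a Jordan domain, hence simply connected, $F$ admits a holomorphic primitive $\Phi$ on $D$, $\Phi'=F$. Writing $\Phi=u+iv$ with $u:=\mathrm{Re}\,\Phi$, the Cauchy--Riemann equations give $\Phi'=u_x+iv_x=u_x-iu_y$, so that $u_x-iu_y=F$ pointwise in $D$. In particular $u$ is harmonic on $D$, and at every $\zeta\in\partial D$
\[
\frac{\partial u}{\partial \nu}(z)\;=\;\mathrm{Re}(\nu(\zeta)\,F(z))\;=\;\mathrm{Re}\{\overline{\lambda(\zeta)}\,F(z)\},
\]
whose angular limit at $\zeta$ equals $\varphi(\zeta)$ q.e. on $\partial D$ by the previous step. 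The only nontrivial ingredient is the algebraic reduction via the complex gradient $F=u_x-iu_y$; once that is in place, the theorem follows directly from Theorem \ref{T6} together with the simple connectedness of $D$, so I foresee no genuine obstacle.
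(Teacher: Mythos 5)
Your proposal is correct and follows essentially the same route as the paper: apply Theorem \ref{T6} (with $\mu\equiv 0$ and coefficient $\overline{\nu}$) to obtain an analytic function satisfying the Hilbert condition, take a holomorphic primitive on the simply connected domain $D$, and identify that function with $u_x-iu_y=\overline{\nabla u}$ so that $\partial u/\partial\nu=\mathrm{Re}(\nu\,\overline{\nabla u})$ inherits the required angular limits q.e. Your explicit check that conjugation preserves the class $\mathcal{CBV}$ is a small detail the paper leaves implicit, but otherwise the arguments coincide.
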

{\it Proof.} Indeed, by Theorem \ref{T6} there exists an analytic
function $f:D\to\mathbb{C}$  that has the angular limit
\begin{equation}\label{6.7} \lim_{z\to\zeta}\mathrm{Re}\left[ {\nu(\zeta)}\, f(z)\right]\ =\ {\varphi(\zeta)}
\end{equation}  q.e. on $\partial D.$  Note that an indefinite integral $F$ of $f$ in ${D}$ is also an
analytic function and, correspondingly, the harmonic functions
$u=\mathrm{Re}\, F$ and $v=\mathrm{Im}\, F$ satisfy the
Cauchy-Riemann system  $v_x=-u_y$ и $v_y=u_x$. Hence
$$f\ =\ F'\ =\ F_x\ =\ u_x\ +\ i\, v_x\ =\ u_x\ -\ i\, u_y\ =\
\overline{\nabla u}$$ where $\nabla u=u_x+i\, u_y$ is the gradient
of the function $u$ in the complex form. Thus, (\ref{6.6}) follows
from (\ref{6.7}), i.e. $u$ is the desired harmonic function,
because its directional derivative
$$\frac{\partial u}{\partial\nu}\ =\
{\mathrm Re}\, \overline{\nu}\,\nabla u\ =\ {\mathrm Re}\,
{\nu}\,\overline{\nabla u}\ =\ \langle \nu,\nabla u\rangle$$ is
the scalar product of $\nu$ and the gradient $\nabla u.$ $\Box$

\medskip

\begin{remark}\label{N1}
We are able to say more in the case $\mathrm {Re}[
n\overline{\nu}]>0$ where $n=n(\zeta)$ is the unit interior
normal  at the point
$\zeta\in\partial{D}$. In view of (\ref{6.6}), since the limit
$\varphi(\zeta)$ is finite, there is a finite limit $u(\zeta)$ of
$u(z)$ as $z\to\zeta$ in ${D}$ along the straight line passing
through the point $\zeta$ and being parallel to the vector
$\nu(\zeta).$ Indeed, along this line, for $z$ and $z_0$ that are
close enough to $\zeta$, $$ u(z)\ =\ u(z_0)\ -\
\int\limits_{0}\limits^{1}\ \frac{\partial u}{\partial \nu}\
(z_0+\tau (z-z_0))\ d\tau\ .$$ Thus, at each point with the
condition (\ref{6.6}), there is the directional derivative
$$
\frac{\partial u}{\partial \nu}\ (\zeta)\ :=\ \lim_{t\to 0}\
\frac{u(\zeta+t\,\nu)-u(\zeta)}{t}\ =\ \varphi(\zeta)\ .
$$
\end{remark}

In particular, $\mathrm {Re}[ n\overline{\nu}]=1$ in the case of
the Neumann problem and, thus, we arrive, by Theorem \ref{T1} and
Remark \ref{N1}, at the following  result.

\bigskip

\begin{corollary}\label{C2}
{\it Let $D$ be a Jordan domain in $\Bbb C$ with the quasihyperbolic
boundary condition and let the unit interior normal $n(\zeta)$ to
the boundary $\partial D$ be in the class $\mathcal{CBV}$. Suppose
that $\varphi:\partial D\to\mathbb{R}$ is measurable with respect to
the logarithmic capacity. Then one can find a harmonic function
$u:D\to\mathbb C$ such that  q.e. on $\partial D$  there exist:

\bigskip

1) the finite limit along the normal $n(\zeta)$
$$
u(\zeta)\ :=\ \lim\limits_{z\to\zeta}\ u(z)\ ,$$

2) the normal derivative
$$
\frac{\partial u}{\partial n}\, (\zeta)\ :=\ \lim_{t\to 0}\
\frac{u(\zeta+t\, n)-u(\zeta)}{t}\ =\ \varphi(\zeta)\ ,
$$

3) the angular limit
$$ \lim_{z\to\zeta}\ \frac{\partial u}{\partial n}\, (z)\ =\
\frac{\partial u}{\partial n}\, (\zeta)\ .$$}
\end{corollary}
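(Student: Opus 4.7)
The plan is to read off this corollary as a direct specialization of Theorem \ref{T1} followed by an application of Remark \ref{N1}. By hypothesis the unit interior normal $n:\partial D\to\mathbb C$ satisfies $|n(\zeta)|\equiv 1$ and lies in $\mathcal{CBV}(\partial D)$, and $\varphi$ is measurable with respect to logarithmic capacity. Hence setting $\nu:=n$ in Theorem \ref{T1} produces a harmonic function $u:D\to\mathbb R$ for which the angular limit
\[
\lim_{z\to\zeta}\,\frac{\partial u}{\partial n}(z)\ =\ \varphi(\zeta)
\]
exists q.e.\ on $\partial D$. This already yields item~3) of the corollary, with the unspecified value $\partial u/\partial n(\zeta)$ on the right-hand side being $\varphi(\zeta)$ once items 1) and 2) are established.

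To obtain items 1) and 2), I would apply Remark \ref{N1} to the choice $\nu=n$. The quantity $\mathrm{Re}[n\overline{\nu}] = \mathrm{Re}[n\overline{n}] = |n|^2 = 1$ is strictly positive, so the hypothesis of the remark is automatic at every boundary point. Fixing $\zeta\in\partial D$ at which the angular limit in Theorem \ref{T1} holds and for which $\varphi(\zeta)$ is finite, the identity
\[
u(z)\ =\ u(z_0)\ -\ \int_0^1\frac{\partial u}{\partial n}\bigl(z_0+\tau(z-z_0)\bigr)\,d\tau
\]
along the straight line through $\zeta$ in the direction $n(\zeta)$ gives a Cauchy criterion for $u(z)$ as $z\to\zeta$ along that normal, producing the finite limit $u(\zeta)$ of item~1). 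Then passing to the limit $z\to\zeta$ along the same normal in the difference quotient yields the classical normal derivative $\partial u/\partial n(\zeta)=\varphi(\zeta)$ of item~2). With this identification, item~3) is simply the angular-limit conclusion of Theorem \ref{T1}.

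No real obstacle arises: the result is a genuine corollary, and the only thing to verify is that the exceptional sets combine correctly. The points where a tangent to $\partial D$ fails to exist have zero logarithmic capacity by the standing almost-smoothness assumption, and the points where the angular limit from Theorem \ref{T1} fails also have zero logarithmic capacity; since the union of two sets of zero logarithmic capacity again has zero logarithmic capacity, all three conclusions hold q.e.\ on $\partial D$ simultaneously. $\Box$
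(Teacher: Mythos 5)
Your proposal is correct and follows exactly the paper's own route: the authors derive Corollary \ref{C2} precisely by taking $\nu=n$ in Theorem \ref{T1} and then invoking Remark \ref{N1} with $\mathrm{Re}[n\overline{\nu}]=|n|^2=1>0$ to obtain the finite limit along the normal and the classical normal derivative. Your additional remark about combining the exceptional sets of zero logarithmic capacity is a harmless elaboration of what the paper leaves implicit.
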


Recall that, see e.g. Theorem 16.1.6 in \cite{AIM}, if $f=u+i\, v$
is a regular solution of the Beltrami equation (\ref{1}), then the
function $u$ is a continuous generalized solution of the
divergence type equation \begin{equation}\label{6.8} {\rm div}\,
A(z)\nabla\,u=0\ , \end{equation} called {\it A-harmonic
function}, see \cite{HKM}, i.e. $u\in C\cap W^{1,1}_{\rm loc}(D)$
and
$$
\int\limits_D \langle A(z)\nabla
u,\nabla\varphi\rangle=0\,\,\,\,\,\,\,\,\,\,\,\,\forall\ \varphi\in
C_0^\infty(D)\ , $$ where $A(z)$ is the matrix function:
\begin{equation}\label{6.9}
A=\left(\begin{array}{ccc} {|1-\mu|^2\over 1-|\mu|^2}  & {-2{\rm Im}\,\mu\over 1-|\mu|^2} \\
                            {-2{\rm Im}\,\mu\over 1-|\mu|^2}          & {|1+\mu|^2\over 1-|\mu|^2}  \end{array}\right).
\end{equation}
As we see, the matrix function $A(z)$ in (\ref{6.9}) is symmetric and its
entries $a_{ij}=a_{ij}(z)$ are dominated by the quantity
$$
K_{\mu}(z)\ =\ \frac{1\ +\ |\mu(z)|}{1\ -\ |\mu(z)|}\ ,
$$
and, thus, they are bounded if Beltrami's equation (\ref{1}) is not
de\-ge\-ne\-ra\-te.

\bigskip

Vice verse, uniformly elliptic equations (\ref{6.8}) with symmetric
$A(z)$ and ${\rm det}\,A(z)\equiv 1$ just correspond to
nondegenerate Beltrami equations (\ref{1}) with coefficient
\begin{equation}\label{6.10}
\mu\ =\ \frac{1}{\mathrm{det}\, (I+A)}\ (a_{22}-a_{11}\ -\
2ia_{21})\ =\ \frac{a_{22}-a_{11}\ -\ 2ia_{21}}{1\ +\ \mathrm{Tr}\,
A\ +\ \mathrm{det}\, A}\ .\end{equation} Following \cite{GRY}, we
denote by ${\cal{B}}$ the collection of all such matrix functions
$A(z).$  Recall that the equation (\ref{6.8}) is said to be {\it
uniformly elliptic}, if $a_{ij}\in L^{\infty}$ and $\langle
A(z)\eta,\eta \rangle\geq\varepsilon|\eta|^2$ for some
$\varepsilon>0$ and for all $\eta\in\mathbb{R}^2$.

\bigskip

\begin{corollary}\label{C3}
{\it Let $D$ be a domain with the qua\-si\-hy\-per\-bo\-lic boundary
condition and let $\partial D$ have a tangent q.e. Suppose that
$A\in\mathcal{B}$ and $\varphi:\partial D\to\mathbb{R}$ is
measurable with respect to logarithmic capacity. Then there exists
$A-$harmonic function $u:D\to\mathbb{R}$ satisfying the Dirichlet
boundary condition (\ref{3}).}
\end{corollary}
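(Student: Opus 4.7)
The strategy is to reduce the Dirichlet problem for the divergence equation \eqref{6.8} to the Hilbert problem for a Beltrami equation already solved in Theorem \ref{T6}, exploiting the explicit correspondence between matrices $A\in\mathcal{B}$ and Beltrami coefficients $\mu$ with $\|\mu\|_{\infty}<1$ encoded in \eqref{6.10} and \eqref{6.9}.

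First, given $A(z)\in\mathcal{B}$, I would define the Beltrami coefficient $\mu(z)$ through formula \eqref{6.10}. Since $A$ is symmetric with $\det A\equiv 1$ and uniformly elliptic with entries in $L^{\infty}$, the resulting $\mu$ belongs to $L^{\infty}(D)$ with $\|\mu\|_{\infty}<1$, so the Beltrami equation \eqref{1} with this coefficient is nondegenerate and the hypothesis of Theorem \ref{T6} on $\mu$ is met.

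Next, I would apply Theorem \ref{T6} to the domain $D$ with this coefficient $\mu$, with the constant coefficient $\lambda(\zeta)\equiv 1$ (which trivially lies in $\mathcal{CBV}(\partial D)$, as it has zero variation on every arc), and with the prescribed data $\varphi$. This produces a regular solution $f=u+iv$ of \eqref{1} enjoying the angular limit
\[
\lim_{z\to\zeta}\ \mathrm{Re}[\,\overline{\lambda(\zeta)}\,f(z)]\ =\ \lim_{z\to\zeta}\ u(z)\ =\ \varphi(\zeta)
\]
q.e.\ on $\partial D$ along nontangential paths, so that $u:=\mathrm{Re}\,f$ already satisfies the Dirichlet boundary condition \eqref{3}.

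Finally, the equivalence quoted from Theorem 16.1.6 in \cite{AIM} guarantees that the real part of any regular solution of \eqref{1} with coefficient $\mu$ is a continuous generalized solution of \eqref{6.8} with the matrix \eqref{6.9}; since $\mu$ was constructed from $A$ precisely to invert the correspondence \eqref{6.10}, this matrix coincides with the given $A$, so $u$ is the required $A$-harmonic function. The argument is essentially a specialization of Theorem \ref{T6}; there is no genuine obstacle beyond the trivial observations that $\lambda\equiv 1\in\mathcal{CBV}(\partial D)$ and that the Hilbert condition with $\lambda\equiv 1$ reduces to \eqref{3}.
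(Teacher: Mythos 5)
Your proposal is correct and follows exactly the route the paper intends: the paper states Corollary \ref{C3} without a written proof precisely because it is the immediate combination of the correspondence $A\leftrightarrow\mu$ from (\ref{6.9})--(\ref{6.10}) (via Theorem 16.1.6 of \cite{AIM}) with Theorem \ref{T6} applied with $\lambda\equiv 1$, which is the same specialization that yields Corollary \ref{C1}. Nothing is missing.
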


\bigskip

\begin{theorem}\label{T2} {\it Let $D$ be a domain in
$\mathbb{C}$ with the quasihyperbolic boundary condition and let
$\partial D$ have a tangent q.e. Suppose that $A(z), \:z\in D$, is a
matrix function in the class $\mathcal{B}\cap
C^{\alpha},\:\alpha\in(0,1)$, $\nu:\partial D\to~\mathbb{C}$,
$|\nu(\zeta)|\equiv1$, is in the class ${\cal CBV}$ and
$\varphi:\partial D\to\mathbb{R}$ is measurable with respect to
logarithmic capacity.

Then there exists $A-$harmonic function $u:D\to\mathbb{R}$ in the
class $C^{1+\alpha}$ that has the angular limit
\begin{equation}\label{6.11} \lim_{z\to\zeta}\ \frac{\partial
u}{\partial\nu}\, (z)\ =\ \varphi(\zeta)\ \ \ \mbox{q.e. on
$\partial D$}\ .
\end{equation}}
\end{theorem}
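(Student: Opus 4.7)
The strategy is to follow the blueprint of Theorem \ref{T1}, with the Laplace equation replaced by the $A$-harmonic equation (\ref{6.8}) and Theorem \ref{T4} replaced by Theorem \ref{T6}. First I would convert $A$ to a Beltrami coefficient via formula (\ref{6.10}). Since $A\in\mathcal{B}\cap C^{\alpha}$, the resulting $\mu$ lies in $C^{\alpha}(D)\cap L^{\infty}(D)$ with $\|\mu\|_{\infty}<1$, so by Theorem 16.1.6 of \cite{AIM} a function $u:D\to\mathbb{R}$ is $A$-harmonic if and only if $u=\mathrm{Re}\,F$ for some regular solution $F$ of the Beltrami equation (\ref{1}); interior Schauder estimates then give $F\in C^{1+\alpha}_{\mathrm{loc}}(D)$, hence the required regularity class.

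Next I would derive the key complex-analytic expression for the directional derivative. Using $F_{x}=F_{z}+F_{\bar z}=(1+\mu)F_{z}$ and $F_{y}=i(F_{z}-F_{\bar z})=i(1-\mu)F_{z}$, a short computation with $\nu=\nu_{1}+i\nu_{2}$ yields
\begin{equation*}
\frac{\partial u}{\partial\nu}(z)=\mathrm{Re}\bigl[(\nu+\mu(z)\bar{\nu})\,F_{z}(z)\bigr].
\end{equation*}
Letting $z\to\zeta\in\partial D$ nontangentially and using that the Hölder $\mu$ has a continuous boundary extension, the target condition $\partial u/\partial\nu\to\varphi$ becomes a Hilbert-type angular-limit condition on $F_{z}$, with coefficient $\Lambda_{0}(\zeta):=\overline{\nu(\zeta)+\mu(\zeta)\overline{\nu(\zeta)}}$, whose modulus is uniformly bounded below by $1-\|\mu\|_{\infty}>0$.

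Then I would invoke the Stoilow factorization to reduce to an analytic Hilbert problem on $\mathbb{D}$. Fix the homeomorphic regular solution $h=G\circ g$ built in the proof of Theorem \ref{T6}. Every regular solution $F$ of the Beltrami equation is of the form $F=\mathcal{B}\circ h$ for some analytic $\mathcal{B}$, and $F_{z}=\mathcal{B}'(h)\,h_{z}$. Pulling the condition through $h_{\ast}:\partial D\to\partial\mathbb{D}$, which by Becker--Pommerenke (Remark \ref{RBP}) and quasiconformal extension is bi-Hölder and hence preserves $\mathcal{CBV}$ and logarithmic-capacity measurability (Remarks \ref{R0}, \ref{R1}), the problem becomes
\begin{equation*}
\mathrm{Re}\bigl[\overline{\Lambda(\eta)}\,\mathcal{B}'(\eta)\bigr]=\Psi(\eta)\quad\text{q.e.\ on }\partial\mathbb{D},
\end{equation*}
where $\Lambda$ and $\Psi$ are the normalized pullbacks. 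Assuming $\Lambda\in\mathcal{CBV}(\partial\mathbb{D})$ with $|\Lambda|\equiv 1$ and $\Psi$ measurable w.r.t.\ logarithmic capacity, Theorem \ref{T4} supplies an analytic $\Phi:=\mathcal{B}'$ with the desired angular-limit property. Integrating $\Phi$ yields the analytic primitive $\mathcal{B}$, and one sets $F:=\mathcal{B}\circ h$, $u:=\mathrm{Re}\,F$. The verification that angular limits survive the pullback is identical to the final paragraph of the proof of Theorem \ref{T6}, using that nontangential paths in $D$ correspond q.e.\ to nontangential paths in $\mathbb{D}$ under $h$ (the tangent hypothesis on $\partial D$ combined with Lindelöf's theorem and the bounded angle distortion of the quasiconformal factor $G$).

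The principal obstacle is the verification of $\Lambda\in\mathcal{CBV}(\partial\mathbb{D})$: one must control the boundary behavior of $h_{z}$, whose argument enters $\arg\Lambda$ additively. The factor $\nu+\mu\bar{\nu}$ causes no trouble since $\nu\in\mathcal{CBV}$ and $\mu$ is Hölder on $\bar{D}$, and multiplication/addition of a $\mathcal{CBV}$ function by a continuous function of bounded variation preserves $\mathcal{CBV}$. For $h_{z}$ one uses the explicit factorization $h_{z}=G_{w}(g(z))\,g'(z)$ together with the fact that $g$ is conformal (so $\arg g'$ is the harmonic conjugate of $\log|g'|$, which inherits $\mathcal{CBV}$-type behavior from the almost-smooth tangent hypothesis on $\partial D$) and $G$ is quasiconformal with Hölder Beltrami coefficient (so $\arg G_{w}$ is Hölder away from a set of zero logarithmic capacity on $\partial\mathbb{D}$). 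The normalization absorbing $|h_{z}|$ into $\Psi$ is legitimate because division by a q.e.\ positive function measurable with respect to logarithmic capacity preserves that measurability. With this verified, the five preceding steps assemble into the required proof.
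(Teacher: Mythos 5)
Your derivation of the identity $\partial u/\partial\nu=\mathrm{Re}\,[(\nu+\mu\bar{\nu})F_z]$ is correct and the idea of reducing to a Hilbert-type condition is in the right spirit, but the step you yourself single out as the principal obstacle --- membership of the pulled-back coefficient in $\mathcal{CBV}(\partial\mathbb D)$ --- is a genuine gap, and the argument you sketch for it does not close it. The factor $h_z=G_w(g(z))\,g'(z)$ coming from the factorization in Theorem \ref{T6} has no usable boundary behaviour: the Beltrami coefficient (\ref{N}) of $G$ is merely measurable (since $g'/\overline{g'}$ does not extend continuously to the boundary of a domain that is only almost smooth with the quasihyperbolic boundary condition), so $G_w$ is only an $L^p_{\rm loc}$ function with no pointwise boundary values, and the claim that $\arg G_w$ is ``H\"older away from a set of zero logarithmic capacity on $\partial\mathbb D$'' is unsupported. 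Likewise, the existence of a tangent q.e.\ controls neither the nontangential limits of $\arg g'$ nor the variation of the tangent direction, so $\arg g'$ need not contribute a $\mathcal{CBV}$ summand. A telling symptom is that your argument never genuinely uses the hypothesis $A\in C^{\alpha}$ at the point where it is needed.

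The paper supplies exactly the missing ingredient. It extends the coefficient $\mu$ of (\ref{6.10}) to a H\"older continuous $\mu_*$ on a neighbourhood $D_*$ of $\overline D$ (Lemma 1 of \cite{GRY}), solves the Beltrami equation there, and invokes Iwaniec's regularity theory \cite{Iw} to obtain a quasiconformal $h$ whose first partial derivatives are H\"older continuous with nonvanishing Jacobian on all of $D_*\supset\overline D$. Hence $h_{\nu(\zeta)}(\zeta)$ is continuous and nonzero up to $\partial D$, and the directional-derivative problem for the $A$-harmonic $u$ transplants, via $u=U\circ h$ and the chain rule $u_\nu=\mathrm{Re}\,(\nu_*h_\nu)\cdot(\partial U/\partial\mathcal N)\circ h$, into the directional-derivative problem of Theorem \ref{T1} for a harmonic function $U$ in $h(D)$, with rotated direction field $\mathcal N=\nu_*\circ h^{-1}$ and rescaled data $\Phi$. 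In short, the reduction is to Theorem \ref{T1} in the image domain under a boundary-regular quasiconformal change of variables, not to Theorem \ref{T4} in the disk through the Stoilow factorization of Theorem \ref{T6}; it is the H\"older regularity of $A$, pushed up to the closed domain by the extension trick, that tames the derivative of the auxiliary quasiconformal map. To repair your argument you would have to replace the $h$ of Theorem \ref{T6} by this smoother $h$ before factorizing.
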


{\it Proof.} By the above remarks, a desired function $u$ is a real
part of a solution $f$ in class $W^{1,1}_{\mathrm loc}$ for the
Beltrami equation (\ref{1}) with $\mu\in C^{\alpha}$ given by the
formula (\ref{6.10}). By Lemma 1 in \cite{GRY} $\mu$ is extended to
a H\"older continuous function $\mu_*:\mathbb{C}\to\mathbb{C}$ of
the class $C^{\alpha}$. Set $k=\max\, |\mu(z)|<1$ in $D$. Then, for
every $k_*\in(k,1)$, there is an open neighborhood $U$ of $D$ where
$|\mu_*(z)|\le k_*$. Let $D_*$ be a connected component of $U$
containing $\overline D$.

By the Measurable Riemann Mapping Theorem, see e.g. \cite{Alf},
\cite{BGMR} and \cite{LV}, there is a quasiconformal mapping
$h:{{D_*}}\to{\mathbb{C}}$ a.e. satisfying the Beltrami equation
(\ref{1}) with the complex coefficient $\mu^*:=\mu_*|_{D_*}$ in
$D_*$. Note that the mapping $h$ has the H\"older continuous first
partial derivatives in $D_*$ with the same order of the H\"older
continuity as $\mu$, see e.g. \cite{Iw} and also \cite{IwDis}.
Moreover, its Jacobian \begin{equation}\label{6.13} J_h(z)\ne 0\ \ \
\ \ \ \ \ \ \ \ \ \ \forall\ z\in D_*\ , \end{equation} see e.g.
Theorem V.7.1 in \cite{LV}. Thus, the directional derivative
$$h_{\omega}(z)=\frac{\partial h}{\partial\omega}\, (z)\ :=\
\lim_{t\to0}\ \frac{h(z\ +\ t\,\omega)\ -\ h(z)}{t}\ \ne\ 0 \ \ \
\ \ \ \ \ \ \ \ \ \forall\ z\in D_*\ \forall\
\omega\in\partial\mathbb D$$ and it is continuous by the
collection of the variables $\omega\in\partial\mathbb D$ and $z\in
D_*$. Thus, the functions
$$ \nu_*(\zeta)\ :=\
\frac{|h_{\nu(\zeta)}(\zeta)|}{h_{\nu(\zeta)}(\zeta)}\ \ \ \ and \ \
\ \ \varphi_*(\zeta)\ :=\ \frac{\varphi(\zeta)}
{|h_{\nu(\zeta)}(\zeta)|}
$$ are measurable with respect to the logarithmic capacity, see e.g.
convergence arguments in \cite{KZPS}, Section 17.1.

The logarithmic capacity of a set coincides with its transfinite
diameter, see e.g.  \cite{F} and the point 110 in \cite{N}.
Moreover, quasiconformal mappings are H\"{o}lder continuous on
compacta, see e.g. Theorem II.4.3 in \cite{LV}. Hence the mappings
$h$ and $h^{-1}$ transform sets of logarithmic capacity zero on
$\partial D$ into sets of logarithmic capacity zero on $\partial
D^*$, where $D^*:=h(D)$, and vice versa.

\medskip

Further, the functions ${\cal{N}} := \nu_*\circ h^{-1}|_{\partial
D^*}$ and $\Phi :=\left(\varphi_* /h_{\nu}\right)\circ
h^{-1}|_{\partial D^*}$ are mea\-su\-rab\-le with respect to the
logarithmic capacity. Indeed, a measurable set with respect to the
lo\-ga\-rith\-mic capacity is transformed under the mappings $h$ and
$h^{-1}$ into measurable sets with respect to the logarithmic
capacity. Really, such a set can be represented as the union of a
sigma-com\-pac\-tum and a set of logarithmic capacity zero. On the
other hand, the compacta are transformed  under continuous mappings
into compacta and the compacta are measurable with respect to  the
logarithmic capacity.

\medskip

Recall that the distortion of angles under quasiconformal mappings
$h$ и $h^{-1}$ is bounded, see e.g. \cite{A}, \cite{AG} and
\cite{Ta}. Thus, nontangential paths to $\partial D$ are transformed
into nontangential paths to $\partial D^*$ for a.e.
$\zeta\in\partial D$ with respect to the logarithmic capacity and
inversely.

\medskip

By Theorem \ref{T1}, one can find a harmonic function
$U:D^*\to\mathbb{R}$ that has the angular limit
\begin{equation}\label{6.13}\lim_{w\to\xi}\ \frac{\partial
U}{\partial\mathcal{N}}\, (w)\ =\ \Phi(\xi)\ \ \ \ \ \mbox{q.e. on
$\partial D^*$}\ .
\end{equation}

Moreover, one can find a harmonic function $V$ in the simply
connected domain $D^*$ such that $F=U+iV$ is an analytic function
and, thus, $u:=\mathrm{Re}\, f=U\circ h$, where $f:=F\circ h$, is a
desired $A$-harmonic function in Theorem \ref{T2} because $f$ is a
regular solution of the corresponding Beltrami equation (\ref{1})
and also
$$ u_{\nu} = \langle\ \nabla U \circ h\ ,\ h_{\nu}\ \rangle =
\langle\ \nu_*\,\nabla U \circ h\ ,\ \nu_*\, \, h_{\nu}\ \rangle =$$
$$=\langle\ \frac{\partial U}{\partial\mathcal{N}}\ \circ h\ ,\
\nu_*\,\, h_{\nu}\ \rangle = \frac{\partial U}{\partial\mathcal{N}}
\ \circ h\ \,\ {\mathrm Re}\, (\nu_*h_{\nu}).\ \ \ \Box
$$

\bigskip

The following statement concerning to the Neumann problem for
$A$-harmonic functions is a partial case of Theorem \ref{T2}.

\bigskip

\begin{corollary}\label{C4} {\it Let $D$ be a domain in
$\mathbb{C}$  with the quasihyperbolic boundary condition and let
$\partial D$ have a tangent q.e. Suppose that $A(z), \:z\in D$, is a
matrix function in the class $\mathcal{B}\cap
C^{\alpha},\:\alpha\in(0,1)$, the interior unit normal $n=n(\zeta)$
to $\partial{D}$ is in the class ${\cal CBV}$ and $\varphi:\partial
D\to\mathbb{R}$ is measurable with respect to the logarithmic
capacity.

\medskip

Then there is $A-$harmonic function $u:D\to\mathbb{R}$ of the class
$C^{1+\alpha}$ such that q.e. on $\partial D$ there exist:

\medskip

1) the finite limit along the normal $n(\zeta)$
$$
u(\zeta)\ :=\ \lim_{z\to\zeta}\ u(z)$$

2) the normal derivative
$$
\frac{\partial u}{\partial n}\, (\zeta)\ :=\ \lim_{t\to0}\
\frac{u(\zeta+t\, n)\ -\ u(\zeta)}{t}\ =\ \varphi(\zeta)
$$

3) the angular limit
$$ \lim_{z\to\zeta}\ \frac{\partial u}{\partial n}\, (z)\ =\
\frac{\partial u}{\partial n}\, (\zeta)\ .$$}
\end{corollary}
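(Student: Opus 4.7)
The plan is to derive this as an immediate specialization of Theorem \ref{T2}, taking the direction field $\nu$ to be the inward unit normal $n=n(\zeta)$, and then to upgrade the angular derivative statement to a classical normal derivative via the integration argument from Remark \ref{N1}.

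First I would apply Theorem \ref{T2} with $\nu:=n$. All hypotheses carry over: $A\in\mathcal{B}\cap C^\alpha$ is given, $n\in\mathcal{CBV}$ is given with $|n(\zeta)|\equiv 1$, and $\varphi$ is measurable with respect to the logarithmic capacity. The conclusion of Theorem \ref{T2} then produces an $A$-harmonic function $u\in C^{1+\alpha}$ with
$$
\lim_{z\to\zeta}\ \frac{\partial u}{\partial n}(z)\ =\ \varphi(\zeta)\quad\text{q.e. on }\partial D,
$$
the limit being understood angularly. This is precisely item (3) of the statement.

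Second, to upgrade to items (1) and (2), I would replay the short argument of Remark \ref{N1} with $\nu=n$. Since we are now in the Neumann case, $\mathrm{Re}[n\overline{n}]=|n|^2=1>0$, so the premise of that remark is automatic at every boundary point where $n(\zeta)$ is defined. Fixing a $\zeta\in\partial D$ at which $\partial D$ has a tangent, $n(\zeta)$ exists, and the angular limit above equals the finite value $\varphi(\zeta)$, and moving along the interior normal ray through $\zeta$, one has
$$
u(z)\ =\ u(z_0)\ -\ \int_0^1\ \frac{\partial u}{\partial n}(z_0+\tau(z-z_0))\ d\tau
$$
for $z,z_0$ on that ray and sufficiently close to $\zeta$. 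The ray is itself nontangential, so by (3) the integrand tends to the finite number $\varphi(\zeta)$; letting $z\to\zeta$ along the ray yields the finite normal limit $u(\zeta)$ in (1), and a difference-quotient manipulation of the same identity identifies the classical normal derivative in (2) with $\varphi(\zeta)$.

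The step I would verify most carefully — and the only real obstacle worth naming — is the bookkeeping on the exceptional sets. One must ensure that the union of (a) the set where $\partial D$ fails to have a tangent, (b) the set where Theorem \ref{T2} fails to deliver the angular limit for $\partial u/\partial n$, and (c) the set where the Remark \ref{N1} rectification along the normal ray breaks down, is still of zero logarithmic capacity on $\partial D$. Each of these three sets is q.e.\ negligible by construction, and countable unions of sets of zero logarithmic capacity are again of zero logarithmic capacity, so the joint exceptional set is negligible and (1)--(3) hold simultaneously q.e.\ on $\partial D$.
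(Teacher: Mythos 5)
Your proposal is correct and follows essentially the same route as the paper: the authors present this corollary precisely as the special case $\nu=n$ of Theorem \ref{T2}, with items (1) and (2) obtained by the integration argument of Remark \ref{N1}, which applies automatically since $\mathrm{Re}\,[n\overline{n}]=1>0$. Your additional care about taking the union of the exceptional sets of zero logarithmic capacity is sound and only makes explicit what the paper leaves implicit.
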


\bigskip

\section{On the dimension of the spaces of solutions}

It was established in \cite{ER}, Theorem 8.1, that the space of all
harmonic functions $u:\mathbb D\to\mathbb R$ that has the angular
limit $\lim\limits_{z\to\zeta}u(z) = 0$ q.e. on $\partial\mathbb D$
has the infinite dimension. This statement can be extended to the
Hilbert boundary value problem because we reduced this problem in
Theorem \ref{T4} to the corresponding two Dirichlet problems.

\medskip

\begin{theorem}\label{T8} {\it
Let $\lambda:\partial\mathbb D\to\partial\mathbb D$ be in class
${\cal{CBV}}(\partial\mathbb D)$ and $\varphi:\partial\mathbb
D\to\mathbb R$ be measurable with respect to logarithmic capacity.
Then the space of all analytic functions $f:\mathbb D\to\mathbb C$
with the angular limit
\begin{equation}\label{25}
\lim\limits_{z\to\zeta}\ \mathrm {Re}\ \{\overline{\lambda(\zeta)}\,
f(z)\}\ =\ \varphi(\zeta)\ \ \ \ \ \mbox{q.e. on $\partial\mathbb
D$}
\end{equation}
has the infinite dimension.}
\end{theorem}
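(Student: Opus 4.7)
The plan is to reduce the claim to the already established fact (Theorem 8.1 of \cite{ER}) that the real vector space of bounded harmonic functions $u:\mathbb D\to\mathbb R$ with q.e.\ zero angular limit on $\partial\mathbb D$ is infinite-dimensional. First, Theorem \ref{T4} furnishes at least one analytic solution $f_0$ of (\ref{25}). Since the Hilbert condition is $\mathbb R$-affine in $f$, every other solution has the form $f_0+h$, where $h:\mathbb D\to\mathbb C$ is analytic and satisfies the homogeneous condition $\lim_{z\to\zeta}\mathrm{Re}\{\overline{\lambda(\zeta)}\,h(z)\}=0$ q.e.\ on $\partial\mathbb D$. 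Thus it is enough to produce an infinite $\mathbb R$-linearly independent family of such $h$.

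To manufacture them I would reuse the auxiliary analytic factor $\mathcal A(z)=\exp\{ig(z)\}$ built inside the proof of Theorem \ref{T4}, where $g=u_g+iv_g$ is the Schwartz integral of the argument function $\alpha_\lambda$ supplied by Proposition \ref{P22}. By construction, $u_g(z)\to\alpha_\lambda(\zeta)$ at each continuity point of $\alpha_\lambda$, and by Lemma \ref{T3C} the conjugate function $v_g$ has a finite angular limit $\beta(\zeta)$ q.e.\ on $\partial\mathbb D$. Combined with the identity $\lambda(\zeta)=\exp\{i\alpha_\lambda(\zeta)\}$ q.e.\ from Proposition \ref{P22}, this yields
$$\overline{\lambda(\zeta)}\,\mathcal A(\zeta)\ =\ e^{-\beta(\zeta)}\quad\text{q.e.\ on $\partial\mathbb D$,}$$
a strictly positive real number. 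Consequently, for any analytic $\psi:\mathbb D\to\mathbb C$ whose real part admits angular limit $0$ q.e.\ on $\partial\mathbb D$, the product $h:=\mathcal A\,\psi$ satisfies the homogeneous Hilbert condition, because $\overline{\lambda(\zeta)}\,h(z)$ has the angular limit $e^{-\beta(\zeta)}\psi(\zeta)$ q.e., whose real part $e^{-\beta(\zeta)}\,\mathrm{Re}\,\psi(\zeta)$ vanishes q.e.

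To finish, I would invoke Theorem 8.1 of \cite{ER} to pick a sequence $u_1,u_2,\ldots$ of $\mathbb R$-linearly independent bounded harmonic functions on $\mathbb D$ with q.e.\ zero angular limits, choose harmonic conjugates $v_n$, and set $\psi_n:=u_n+iv_n$. These are $\mathbb R$-linearly independent analytic functions whose real parts all have q.e.\ zero angular limits; since $\mathcal A$ is nowhere zero in $\mathbb D$, multiplication by $\mathcal A$ preserves linear independence, so the functions $h_n:=\mathcal A\,\psi_n$ form an infinite linearly independent family of homogeneous solutions, and $\{f_0+h_n\}$ exhibits an infinite linearly independent set of solutions of (\ref{25}).

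The step needing the most care is the multiplication step: one must justify that the angular-limit product rule applies, which it does because $\mathcal A$ has a finite nonzero angular limit q.e.\ --- exactly what Lemma \ref{T3C} (used inside Theorem \ref{T4}) delivers. Everything else --- existence of harmonic conjugates, the argument-factorisation of $\lambda$, and preservation of linear independence by multiplication by a nonvanishing analytic function --- is then routine bookkeeping on top of Theorem \ref{T4}.
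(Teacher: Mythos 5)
Your argument is correct and is essentially the paper's own proof: the paper likewise takes a harmonic $u$ with q.e.\ zero angular limits from Theorem 8.1 of \cite{ER}, completes it to an analytic ${\mathcal C}=u+iv$, and replaces $f={\mathcal A}{\mathcal B}$ from Theorem \ref{T4} by ${\mathcal A}({\mathcal B}+{\mathcal C})$, which is exactly your $f_0+h_n={\mathcal A}({\mathcal B}+\psi_n)$. The only (shared) imprecision is asserting that $\overline{\lambda(\zeta)}h(z)$ has a full angular limit q.e.\ --- Theorem 8.1 of \cite{ER} controls only $\mathrm{Re}\,\psi_n$, not $\mathrm{Im}\,\psi_n$, so one should argue only about the real part via the same ``elementary computation'' as in Theorem \ref{T4}, which is precisely the level of detail the paper itself supplies.
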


\bigskip

\begin{proof} Let $u:\mathbb D\to\mathbb R$ be a harmonic function
that has the angular limit $0$ q.e. on $\partial\mathbb D$ from
Theorem 8.1 in \cite{ER}. Then there is the unique harmonic function
$v:\mathbb D\to\mathbb R$ with $v(0)=0,$ such that ${\mathcal
C}=u+iv$ is an analytic function. Thus, setting in the proof of
Theorem \ref{T4} $g={\mathcal A}({\mathcal B}+{\mathcal C})$ instead
of $f={\mathcal A}\,{\mathcal B}$, we obtain by Theorem 8.1 in
\cite{ER} that the space of solutions of the Hilbert boundary value
problem (\ref{25}) for analytic functions in Theorem \ref{T4} has
the infinite dimension.
\end{proof} $\Box$

\bigskip

Finally, since the proof of the rest of theorems and corollaries was
sequentially reduced to Theorem \ref{T4}, we come by Theorem
\ref{T8} to the following conclusion.

\medskip

\begin{corollary}\label{C5}
{\it All the spaces of solutions of the boundary value problems in
Theorems \ref{T6}, \ref{T1}, \ref{T2}, Corollaries
\ref{C1}--\ref{C4} also have the infinite dimension.}
\end{corollary}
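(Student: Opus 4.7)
The plan is to observe that each of the stated theorems and corollaries produced its solution by applying, in a fixed way, a chain of operations to an analytic solution of a Hilbert problem on $\mathbb{D}$ coming from Theorem \ref{T4}. I intend to show that each of these operations is linear and sends linearly independent inputs to linearly independent outputs, so that the infinite-dimensional freedom supplied by Theorem \ref{T8} propagates to every subsequent problem.

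First, I would reformulate Theorem \ref{T8} in homogeneous form. Taking $\varphi\equiv 0$ in that theorem, the space $V_\lambda$ of analytic functions $f:\mathbb{D}\to\mathbb{C}$ with $\mathrm{Re}\{\overline{\lambda}f\}=0$ q.e. on $\partial\mathbb{D}$ is infinite-dimensional, since the full (affine) solution space in Theorem \ref{T8} is $f_0+V_\lambda$ for any particular $f_0$. Then, for Theorem \ref{T6}, the proof there writes every solution as $f=\mathcal{A}\circ h$ with $h=G\circ g$ a fixed homeomorphism of $\overline D$ onto $\overline{\mathbb D}$, so the map $\mathcal{A}\mapsto \mathcal{A}\circ h$ is a linear injection of $V_\Lambda$ into the homogeneous solution space of the Beltrami-Hilbert problem, transferring infinite dimension. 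Corollaries \ref{C1} and \ref{C3} then follow, being explicit specializations of Theorem \ref{T6}.

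For Theorem \ref{T1}, the construction was $u=\mathrm{Re}\,F$ where $F$ is a primitive of an analytic solution $f$ of the associated Beltrami-Hilbert problem. Normalizing primitives by $F(z_0)=0$ at a fixed interior point makes $f\mapsto F$ a linear injection; composing with $F\mapsto\mathrm{Re}\,F$ still gives an injection on the homogeneous solution space, because if $\sum c_n\mathrm{Re}\,F_n\equiv 0$ then $\sum c_n F_n$ is a purely imaginary analytic function on the connected domain $D$, hence constant, and differentiating gives $\sum c_n f_n=0$, forcing $c_n=0$ by independence of the $f_n$. So the harmonic solution space inherits infinite dimension; Corollary \ref{C2} is the Neumann specialization. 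The same reasoning, preceded by the fixed quasiconformal change of variable $h:D_*\to h(D_*)$ of Theorem \ref{T2}, handles Theorem \ref{T2} and its Corollary \ref{C4}, since composition with the homeomorphism $h$ preserves linear independence and the primitive-plus-real-part step is controlled as above.

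The main potential obstacle is exactly the real-part step: taking $\mathrm{Re}$ is only $\mathbb{R}$-linear and has a nontrivial kernel among analytic functions (the purely imaginary constants, plus any analytic function of the form $ic$). I resolve this by working with primitives normalized at an interior point and by using the rigidity of analytic functions (a purely imaginary analytic function on a connected domain is constant), which collapses the kernel to a finite-dimensional piece that is killed by differentiation. Apart from this, all other steps (composition with homeomorphisms, specialization of parameters, passage to affine subspaces) are manifestly dimension-preserving, so chaining them through the dependency graph of the proofs gives the conclusion of Corollary \ref{C5}.
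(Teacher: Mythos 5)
Your proposal is correct and takes essentially the same route as the paper, whose entire argument for Corollary \ref{C5} is the one-line observation that every construction was sequentially reduced to Theorem \ref{T4}, so Theorem \ref{T8} applies. You merely make explicit why each reduction step (composition with a fixed homeomorphism, passage to normalized primitives, taking real parts) transports linear independence and hence infinite dimension, with the real-part step handled correctly via the rigidity of purely imaginary analytic functions; this is a welcome filling-in of detail, not a different method.
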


\medskip

Recently\ it\ was\ established\ by\ us\ a\ number of\ effective\
criteria\ for\ the\ \ exi\-sten\-ce of solutions for the degenerate
Beltrami equations, see e.g. \cite{GRSY}. That makes possible to
consider the boundary value problems for such equations, too.
However, the latter will demand a more deep study of properties of
the mappings with finite distortion, see e.g. the monographs
\cite{HK} and \cite{MRSY}.

\bigskip

{\bf ACKNOWLEDGMENTS.} This work was partially supported by grants
of Ministry of Education and Science of Ukraine, project number is
0119U100421.

\medskip

\medskip
\noindent
{\bf Vladimir Gutlyanskii, Vladimir Ryazanov\\ and  Artyem Yefimushkin,}\\
Institute of Applied Mathematics and Mechanics,\\
National Academy of Sciences of Ukraine,\\
vgutlyanskii@gmail.com, Ryazanov@nas.gov.ua,\\
a.yefimushkin@gmail.com

\medskip\noindent
{\bf Vladimir Ryazanov,}\\
National University of Cherkasy, Ukraine,\\
Physics Department, Laboratory of Mathematical Physics,\\
vl.ryazanov1@gmail.com

\medskip
\noindent {\bf Eduard Yakubov}\\
Holon Institute of Technology, Holon, Israel,\\
yakubov@hit.ac.il, eduardyakubov@gmail.com


\begin{thebibliography}{100}

\bibitem{A} {\it Agard S.} Angles and quasiconformal
mappings in space // J. Anal Math. -- 1969. -- {\bf 22}. -- P.
177–200.

\bibitem{AG} {\it Agard S. B., Gehring F.W.} Angles and quasiconformal
mappings // Proc. London Math. Soc. (3) -- 1965. -- {\bf 14a}. -- P.
1–21.

\bibitem{Alf}
{\it Ahlfors L.} Lectures on Quasiconformal Mappings. -- New York:
Van Nostrand, 1966.

\bibitem{AB}
{\it Ahlfors L., Beurling A.} The boundary correspondence under
quasiconformal mappings // Acta Math. -- 1956. -- {\bf 96}. -- P.
125--142.

\bibitem{AVV} {\it Anderson G.D., Vamanamurthy M.K., Vuorinen M.K.} Conformal invariants,
inequalities and quasiconformal maps. --  New York: Wiley-Intersci.
Publ., 1997.

\bibitem{AIM} {\it Astala K., Iwaniec T., Martin G.J.} Elliptic differential
equations and quasiconformal mappings in the plane. -- Princeton
Math. Ser.  {\bf 48}. --  Princeton: Princeton Univ. Press, 2009.

\bibitem{AK}
{\it Astala K., Koskela P.} {Quasiconformal mappings and global
integrability of the derivative} // J. Anal. Math. -- 1991. --  {\bf
57}. -- P. 203--220.

\bibitem{Bari}
{\it Bari N.K.} Trigonometric series. -- Moscow: Gos. Izd.
Fiz.--Mat. Lit., 1961 [in Russian]; transl. as A treatise on
trigonometric series, Vols I and II. -- New York: Macmillan Co.,
1964.

\bibitem{BP}
{\it Becker J., Pommerenke Ch.} H\"older continuity of conformal
mappings and nonquasiconformal Jordan curves // Comment. Math. Helv.
-- 1982. -- {\bf 57}, no. 2. -- P. 221–225.

\bibitem{Be}
{\it Begehr H.} {Complex analytic methods for partial differential
equations. An introductory text.} -- River Edge, NJ: World
Scientific Publishing Co., Inc., 1994.

\bibitem{BW}
{\it Begehr H., Wen G.Ch.} {Nonlinear elliptic boundary value
problems and their applications.} -- Pitman Monographs and Surveys
in Pure and Applied Mathematics {\bf 80}. -- Harlow: Longman, 1996.

\bibitem{Bo}
{\it Bojarski B.V.} {Homeomorphic solutions of Beltrami systems} (in
Russian) // Dokl. Akad. Nauk SSSR (N.S.) -- 1955. -- {\bf 102}. --
P. 661–664.

\bibitem{Bo^*}
{\it Bojarski B.V.} {Generalized solutions of a system of
differential equations of the first order and elliptic type with
discontinuous coefficients} // Report Dept. Math. Stat. -- 2009. --
{\bf 118}. -- Jyv\"askyl\"a: Univ. of Jyv\"askyl\"a; transl. from
Mat. Sb. (N.S.) -- 1957. -- {\bf 43(85)}. -- P. 451–503.

\bibitem{BGMR}
{\it Bojarski B., Gutlyanskii V., Martio O., Ryazanov V.}, {
Infinitesimal geo\-me\-try of quasiconformal and bi-lipschitz
mappings in the plane}. -- EMS Tracts in Ma\-the\-ma\-tics {\bf 19}.
-- Z\"urich: European Mathematical Society, 2013.

\bibitem{Kar}
{\it Carleson L.} Selected Problems on Exceptional Sets. --
Princeton etc.: Van Nostrand Co., Inc., 1971.

\bibitem{DMRV}
{\it Dovgoshey O., Martio O., Ryazanov V., Vuorinen M.} The Cantor
function // Expo. Math. -- 2006. -- {\bf 24}, no. 1. -- P. 1–37.

\bibitem{Du}
{\it Duren P.L.} { Theory of Hp spaces}. -- {Pure and Applied
Mathematics}, {\bf 38}. -- New York--London: Academic Press,  1970.

\bibitem{ER}
{\it Efimushkin A., Ryazanov V.} On the Riemann-Hilbert problem for
the Beltrami equations. Complex analysis and dynamical systems VI.
Part 2, 299–316, Contemp. Math., 667, Israel Math. Conf. Proc.,
Amer. Math. Soc., Providence, RI, 2016.

\bibitem{Fe}
{\it Federer H.} Geometric Measure Theory. -- Berlin:
Springer-Verlag, 1969.


\bibitem{F} {\it F\'ekete M.} \"Uber die Verteilung der Wurzeln bei gewissen
algebraischen Gleichungen mit ganzzahligen Koeffizienten // Math. Z.
-- 1923. -- {\bf 17}. -- P. 228-249.

\bibitem{G}
{\it Gakhov F.D.} {Boundary value problems}. --  New York: Dover
Publications. Inc., 1990.

\bibitem{GM1}
{\it Gehring F.W., Martio O.} {Quasiextremal distance domains and
extension of quasiconformal mappings} // J. Analyse Math. -- 1985.
-- {\bf 45}. -- P. 181–206.

\bibitem{GM}
{\it Gehring F. W., Martio O.} Lipschitz classes and quasiconformal
mappings // Ann. Acad. Sci. Fenn. Ser. A I Math. -- 1985. -- {\bf
10}. -- P. 203–219.

\bibitem{GP}
{\it Gehring F.W., Palka B.P.} Quasiconformally homogeneous domains
// J. Analyse Math. -- 1976. -- {\bf 30}. -- P. 172–199.

\bibitem{Go}
{\it Goluzin G. M.} {Geometric theory of functions of a complex
variable}. -- Transl. of Math. Monographs, {\bf 26}. -- Providence,
R.I.: American Mathematical Society, 1969.

\bibitem{GR}
{\it Gutlyanskii V., Ryazanov V.}  On recent advances in boundary
value problems in the plane // Ukr. Mat. Visn. -- 2016. --  {\bf
13}, no. 2. -- P. 167–212.

\bibitem{GRSY}
{\it Gutlyanskii V., Ryazanov V., Srebro U., Yakubov E.} {The
Beltrami Equation: A Geometric Approach}. -- Developments in
Mathematics, {\bf 26}. -- New York etc.: Springer, 2012.

\bibitem{GRY}
{\it Gutlyanskii V., Ryazanov V., Yefimushkin A.} On the boundary
value problems for quasiconformal functions in the plane // Ukr.
Mat. Visn. -- 2015. -- {\bf 12}, no. 3. -- P. 363--389; transl. in
J. Math. Sci. (N.Y.) -- 2016. -- {\bf 214}, no. 2. -- P. 200–219.


\bibitem{HKM}
{\it Heinonen J., Kilpel\"ainen T., Martio O.} Nonlinear potential
theory of degenerate elliptic equations. -- Oxford Mathematical
Monographs. -- Oxford Science Publications. The Clarendon Press,
Oxford University Press, New York, 1993.

\bibitem{HK}
{\it Hencl S., Koskela P.} Lectures on mappings of finite
distortion. -- Lecture Notes in Mathematics, {\bf 2096}. -- Cham:
Springer, 2014.

\bibitem{H1}
{\it Hilbert D.} \"Uber eine Anwendung der Integralgleichungen auf
eine Problem der Funktionentheorie. -- Verhandl. des III Int. Math.
Kongr., Heidelberg, 1904.

\bibitem{Iw}
{\it Iwaniec T.}   Regularity of solutions of certain degenerate
elliptic systems of equations that realize quasiconformal mappings
in n-dimensional space// {Differential and integral equations.
Boundary value problems}. - 1979. - p. 97--111. -- Tbilisi: Tbilis.
Gos. Univ.


\bibitem{IwDis}
{\it Iwaniec T.}   Regularity theorems for solutions of partial
differential equations for quasiconformal mappings in several
dimensions// {Dissertationes Math. (Rozprawy Mat.)} - 1982. - {\bf
198}. - 45 pp.

\bibitem{Ku}
{\it Koosis P.} {Introduction to $H_p$ spaces}. -- Cambridge Tracts
in Mathematics, {\bf 115}. --  Cambridge: Cambridge Univ. Press,
1998.

\bibitem{KZPS}
{\it Krasnosel'skii M.A., Zabreiko  P.P., Pustyl'nik  E.I.,
Sobolevskii P.E.}  Integral operators in spaces of summable
functions. -- {Monographs and Textbooks on Mechanics of Solids and
Fluids, Mechanics: Analysis}. -- Leiden: Noordhoff International
Publishing, 1976.

\bibitem{LU}
{\it Ladyzhenskaya O.A., Ural'tseva N.N.} { Linear and quasilinear
elliptic equations}. -- New York-London: Academic Press, 1968;
transl. from {Lineinye i kvazilineinye uravneniya ellipticheskogo
tipa}. -- Moscow: Nauka, 1964.

\bibitem{La}
{\it Landkof N.S.} Foundations of modern potential theory. -- Die
Grundlehren der mathematischen Wissenschaften,  {\bf 180}. -- New
York-Heidelberg: Springer-Verlag, 1972.

\bibitem{LV}
{\it Lehto O., Virtanen K.J.} Quasiconformal mappings in the plane.
-- Berlin, Heidelberg: Springer-Verlag, 1973.

\bibitem{MRSY} {\it Martio O., Ryazanov V., Srebro U., Yakubov E.} Moduli in
Modern Mapping Theory. -- Springer Monographs in Mathematics. -- New
York etc.: Springer, 2009.

\bibitem{M}
{\it Mikhlin S.G.}  {Partielle Differentialgleichungen in der
mathematischen Physik}. --  Math. Lehrb\"ucher und Monographien,
\textbf{30}. -- Berlin: Akademie-Verlag, 1978.

\bibitem{Mus} {\it Muskhelishvili N.I.} {Singular integral equations.
Boundary problems of function theory and their application to
mathematical physics}. -- New York: Dover Publications. Inc., 1992.

\bibitem{N}
{\it Nevanlinna R.} {Eindeutige analytische Funktionen}. --
Michigan: Ann Arbor, 1944.

\bibitem{No}
{\it Noshiro K.} {Cluster sets}. --  Berlin etc.: Springer-Verlag,
1960.

\bibitem{Ta} {\it Taari O.} Charakterisierung der Quasikonformit\"at mit Hilfe
der Winkelverzerrung [German] // Ann. Acad. Sci. Fenn. Ser. A I. --
1966. -- {\bf 390}. -- P. 1--43.

\bibitem{TO}
{\it Trogdon Th., Olver Sh.} {Riemann-Hilbert problems, their
numerical solution, and the computation of nonlinear special
functions.} -- Philadelphia: Society for Industrial and Applied
Mathematics (SIAM), 2016.

\bibitem{Po}
{\it Pommerenke Ch.} { Boundary behaviour of conformal maps}. --
{Grundlehren der Mathematischen Wissenschaften}, {\bf 299}. --
Berlin: Springer-Verlag, 1992.

\bibitem{R1}
{\it Ryazanov V.I.} On the Riemann-Hilbert problem without Index //
Ann. Univ. Bucharest, Ser. Math. - 2014. - {\bf 5 (LXIII)}, no. 1. -
P. 169-178.

\bibitem{R2}
{\it Ryazanov V.} Infinite dimension of solutions of the Dirichlet
problem // Open Math. (the former Central European J. Math.) --
2015. -- {\bf 13}. -- P. 348–350.

\bibitem{R3}
{\it Ryazanov V.} On Neumann and Poincare problems for Laplace
equation // Analysis and Mathematical Physics. -- 2017. -- {\bf 7},
no. 3. -- P. 285–289.

\bibitem{R4}
{\it Ryazanov V.} On the boundary behavior of conjugate harmonic
functions // Proceedings of Inst. Appl. Math. Mech. of Nat. Acad.
Sci. of Ukraine. -- 2017. -- {\bf 31}. -- P. 117–123; see also
arXiv.org: 1710.00323v3 [math.CV] 3 Mar 2018, 8 pp.

\bibitem{R5}
{\it Ryazanov V.} The Stieltjes integrals in the theory of harmonic
functions // Zap. Nauchn. Sem. S.-Peterburg. Otdel. Mat. Inst.
Steklova (POMI). -- 2018. -- {\bf 467}. -- Issledovaniya po Lineinym
Ope\-ra\-to\-ram i Teorii Funktsii. -- 2018. -- {\bf 46}. -- P.
151--168; transl. in J. Math. Sci. (N.Y.) -- 2019. -- {\bf 243}, no.
6. -- P. 922--933.

\bibitem{S}
{\it Saks S.} {Theory of the integral}. -- New York: Dover
Publications Inc., 1964.

\bibitem{St}
{\it Stoilow, S.} {Lektsii o topologicheskikh printsipakh teorii
analiticheskikh funktsii.} (Russian) [Lectures on topological
principles of the theory of analytic functions] Transl. from the
French. -- Moscow: Nauka, 1964.

\bibitem{T}
{\it Twomey J.B.} Tangential boundary behaviour of the Cauchy
integral // J. London Math. Soc. (2). -- 1988. -- {\bf 37}, no. 3.
-- P. 447--454.

\bibitem{Vek}
{\it Vekua I.N.} {Generalized analytic functions}. --  London etc.:
Pergamon Press, 1962.

\bibitem{Vu}
{\it Vuorinen M.} Conformal geometry and quasiregular mappings. --
Lecture Notes in Mathematics. {\bf 1319}. -- Berlin:
Springer-Verlag, 1988.

\bibitem{Ye}
{\it Yefimushkin A.} On Neumann and Poincare Problems in A-harmonic
Analysis // Advances in Analysis. -- 2016. -- {\bf 1}, no. 2. -- P.
114--120.

\end{thebibliography}
\end{document}